\documentclass{amsart}


\usepackage{amssymb}
\usepackage{graphicx}
\usepackage{MnSymbol}

\usepackage{amsthm}

\title{On subgroups of first homology}
\author{Samuel M. Corson}

\bibliographystyle{te}

\theoremstyle{definition}\newtheorem{theorem}{Theorem}
\theoremstyle{definition}\newtheorem*{A}{Theorem \ref{directsum}}
\theoremstyle{definition}\newtheorem*{B}{Theorem \ref{Borel}}

\theoremstyle{definition}\newtheorem{bigtheorem}{Theorem}

\theoremstyle{definition}
\theoremstyle{definition}
\theoremstyle{definition}\newtheorem{definition}[theorem]{Definition}
\theoremstyle{definition}
\theoremstyle{definition}\newtheorem{example}{Example}
\theoremstyle{definition}
\theoremstyle{definition}
\theoremstyle{definition}\newtheorem{lemma}[theorem]{Lemma}
\theoremstyle{definition}
\theoremstyle{definition}
\theoremstyle{definition}
\theoremstyle{definition}
\newtheorem*{question*}{Question}
\newtheorem*{theorem*}{Theorem}
\newtheorem*{corollary*}{Corollary}

\def\pmc#1{\setbox0=\hbox{#1}
    \kern-.1em\copy0\kern-\wd0
    \kern.1em\copy0\kern-\wd0}

\newcommand{\diam}{\operatorname{diam}}

\newcommand{\cl}{\operatorname{cl}}

\newcommand{\tor}{\operatorname{Tor}}
\newcommand{\torfree}{\operatorname{Torfree}}
\newcommand{\Div}{\operatorname{Div}}
\newcommand{\red}{\operatorname{Red}}
\newcommand{\inffree}{\operatorname{InfFree}}
\newcommand{\Hom}{\operatorname{Hom}}
\newcommand{\rank}{\operatorname{rank}}
\newcommand{\h}{\overline{H}_1}
\newcommand{\card}{\operatorname{card}}
\newcommand{\Po}{\mathcal{P}}

\begin{document}

\address{Mathematics Department\\
1326 Stevenson Center\\
Vanderbilt University\\
Nashville, TN 37240\\
USA}			

\email{samuel.m.corson@vanderbilt.edu}
\keywords{first homology, fundamental group, metric spaces}
\subjclass[2010]{Primary 14F35, Secondary 03E15}

\maketitle

\begin{abstract}  We prove several new theorems regarding first homology.  Some dichotomies for first homology of Peano continua are presented, as well as a notion of strong abelianization for arbitrary path connected metric spaces.  We also show that the fundamental group of the Hawaiian earring has Borel subgroups of almost all multiplicative and additive types.
\end{abstract}

\begin{section}{Introduction}  Much of the recently developed theory for fundamental groups of path connected Polish spaces can be used in studying the first singular homology of  such spaces.  One or two results in this vein have already been explored (see \cite{CoCo}).  The purpose of this note is to present deeper explorations in the study of first homology.  We describe the organization of this note.

In Section \ref{Preliminaries} we will present some of the preliminary definitions and tools that will be used in our study.  The notion of analytic subgroup of the fundamental group will be reviewed, as well as statements of theorems that will be used.  In Section  \ref{Structuretheorems} we will present the main dichotomy theorems concerning first homology of Peano continua (locally path connected, path connected metrizable compact spaces).  Among these theorems  is the following (with $\torfree(A)$ denoting $A/\tor(A)$):

\begin{bigtheorem}\label{directsum}  Let $X$ be a Peano continuum.  If $\card(\torfree(H_1(X)))<2^{\aleph_0}$ then $H_1(X)$ is a direct sum of cyclic groups and $\tor(H_1(X))$ is of bounded exponent.

\end{bigtheorem}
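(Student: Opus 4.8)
The plan is to split the statement into a purely algebraic reduction and a single topological mechanism, and to argue the substantive half by contraposition. Write $A=H_1(X)$, $T=\tor(A)$, and $F=\torfree(A)=A/T$, so the hypothesis reads $\card(F)<2^{\aleph_0}$. First I would record the reduction: it suffices to prove that $F$ is free abelian and that $T$ is of bounded exponent. Indeed, if $F$ is free then the sequence $0\to T\to A\to F\to 0$ splits, since a free group is projective and so $\mathrm{Ext}(F,T)=0$; hence $A\cong T\oplus F$. A torsion group of bounded exponent is a direct sum of cyclic groups by the Pr\"ufer--Baer theorem, while $F$ is a direct sum of copies of $\mathbb{Z}$, so $A$ becomes a direct sum of cyclic groups with $\tor(A)\cong T$ bounded. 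The converse being trivial, the content of the theorem is exactly the two assertions ``$F$ free'' and ``$T$ bounded''.

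The engine for both, and the step I expect to be the main obstacle, is a Hawaiian-earring extraction. The guiding principle is that any failure of niceness in $H_1(X)$ is witnessed by an infinite family of homology classes which, in a Peano continuum, can always be arranged to be carried by a null sequence of loops: using local path connectedness together with compactness one produces loops $\gamma_1,\gamma_2,\dots$ based at a common point with $\diam(\gamma_n)\to 0$, so that the canonical map $f\colon\mathbb{H}\to X$ sending the $n$-th circle to $\gamma_n$ is continuous. Infinite concatenations then make sense, and for each element $\mathbf{z}=(z_n)$ of the Baer--Specker group $\prod_n\mathbb{Z}$ one obtains a well defined class $c(\mathbf{z})\in A$ with $\mathbf{z}\mapsto c(\mathbf{z})$ a homomorphism governed by the homology of $\mathbb{H}$ recalled in the preliminaries. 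The crux is to certify that, for a suitably chosen witnessing family, $2^{\aleph_0}$ of the classes $c(\mathbf{z})$ remain \emph{torsion-free and linearly independent} modulo $T$, hence inject into $F$. This independence/injectivity statement, which forces $\card(F)=2^{\aleph_0}$ and rules out cancellation in the singular homology of a possibly wild space, is where essentially all the difficulty lies and is precisely what the earlier dichotomy machinery is designed to supply.

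For freeness of $F$ I would first invoke the ``countable or continuum'' dichotomy for $\torfree(H_1(X))$ from the preliminaries: the cardinality hypothesis then forces $F$ to be countable. By Pontryagin's criterion a countable torsion-free group is free as soon as every subgroup of finite rank is free, equivalently finitely generated; so it remains to exclude a finite-rank subgroup that is not finitely generated. Such a subgroup would contain an element divisible by arbitrarily large products of primes, i.e. an element of infinite height, and feeding this infinite divisibility through the engine of the previous paragraph manufactures $2^{\aleph_0}$ independent torsion-free classes, contradicting countability. Hence every finite-rank subgroup of $F$ is free and $F$ is free.

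For boundedness of $T$, suppose it fails: choose torsion classes $t_n$ with $\mathrm{ord}(t_n)\to\infty$, realize them by a null sequence of loops, and feed them into the concatenation map. Because the orders diverge, distinct coefficient sequences $\mathbf{z}$ yield infinitely many classes $c(\mathbf{z})$ of infinite order that are independent modulo $T$, again producing torsion-free rank $2^{\aleph_0}$ and so $\card(F)=2^{\aleph_0}$, a contradiction; thus $T$ is of bounded exponent. Assembling the three pieces, the cardinality hypothesis excludes both continuum alternatives, $F$ is free and $T$ is bounded, and the algebraic reduction of the first paragraph upgrades this to the stated direct-sum-of-cyclics conclusion. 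The hardest point throughout is the topological independence in the second paragraph: extracting the null sequence with full control and proving that the infinitely many concatenated classes genuinely survive as independent elements of $H_1(X)$ rather than collapsing.
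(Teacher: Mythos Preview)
Your algebraic reduction is correct, and freeness of $F=\torfree(H_1(X))$ follows immediately from Theorem~\ref{modtor} (finite-rank free or cardinality $2^{\aleph_0}$), so the Pontryagin argument is unnecessary. The real problem is the bounded-torsion step, where your engine cannot produce the contradiction you claim. Under the hypothesis $\card(F)<2^{\aleph_0}$, Theorem~\ref{modtor} gives $F\cong\mathbb{Z}^k$; its proof shows the kernel $N$ of $\pi_1(X)\to F$ is analytic, and Lemma~\ref{biglemma}(3) then forces $N$ to be \emph{open}, so every loop of sufficiently small diameter at the basepoint lies in $N$. Now take your null sequence $\gamma_n$ with $[\gamma_n]\in T$ of unbounded order and any $\mathbf{z}\in\prod_n\mathbb{Z}$: the infinite concatenation representing $c(\mathbf{z})$ factors as a finite head $\gamma_1^{z_1}*\cdots*\gamma_{n_0}^{z_{n_0}}$ followed by a tail of arbitrarily small diameter. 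The tail lies in $N$ by openness, and each $\gamma_i$ already lies in $N$ since $[\gamma_i]\in T$. Hence $c(\mathbf{z})\in T$ for \emph{every} $\mathbf{z}$, so all your concatenated classes vanish in $F$ rather than being independent there. No contradiction with $\card(F)<2^{\aleph_0}$ arises, and the argument collapses at precisely the point you flagged as hardest.

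The paper's proof avoids Hawaiian-earring constructions entirely. After splitting off $\mathbb{Z}^k$, it observes that for each $j$ the kernel of $\pi_1(X)\to H_1(X)/(\mathbb{Z}^k\oplus H_1(X)_{p_0}\oplus\cdots\oplus H_1(X)_{p_j})$ is an analytic normal subgroup; these kernels form an increasing chain with union $\pi_1(X)$, so Lemma~\ref{biglemma}(4) forces stabilization and only finitely many primes occur in $T$. The same ascending-chain argument applied to the filtration by $H_1(X)_{p^q}$ bounds the exponent at each remaining prime, and Pr\"ufer's theorem finishes. The decisive tool is thus the ``no strictly increasing $\omega$-chain of analytic normal subgroups exhausting $\pi_1(X)$'' lemma, not an infinite-concatenation independence statement.
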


In Section \ref{Astrongabelianization} we introduce a very natural definition of strong abelianization of the fundamental group.  Some desirable properties of this definition are established (functoriality over countable products and shrinking wedges).

In Section \ref{Borelsubgroups} we show that the Hawaiian earring $E$ has normal subgroups of all but a couple of the multiplicative and additive Borel pointclasses, as well as a normal subgroup that is analytic and not Borel.  In particular we show:

\begin{bigtheorem}\label{Borel}  The fundamental group $\pi_1(E)$ has normal subgroups of the following types:

\begin{enumerate}\item true $\Sigma_{\gamma}^0$ for $\omega_1>\gamma \geq 2$

\item true $\Pi_{\gamma}^0$  for each $\omega_1 > \gamma \neq 2$

\item true analytic
\end{enumerate}

\end{bigtheorem}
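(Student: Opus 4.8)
The plan is to reduce the construction to two successive, complexity-preserving changes of coordinates and then to the descriptive theory of Borel ideals; the true analytic case is then handled separately using the nonabelian structure. Throughout, the complexity of a subgroup $H\leq\pi_1(E)$ is measured through the canonical continuous surjection $q\colon\W\to\pi_1(E)$ from the Polish space of words, i.e. via $q^{-1}(H)$. First I would pass to the abelian world: let $e\colon\pi_1(E)\to\mathbb{Z}^{\omega}$ record, coordinatewise, the total exponent of each generator $x_n$. This is a continuous homomorphism onto $\mathbb{Z}^{\omega}$, and $(a_n)_n\mapsto x_1^{a_1}x_2^{a_2}\cdots$ defines a continuous section $\sigma\colon\mathbb{Z}^{\omega}\to\W$ of $e\circ q$. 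Since $\mathbb{Z}^{\omega}$ is abelian, for every subgroup $K\leq\mathbb{Z}^{\omega}$ the preimage $e^{-1}(K)$ contains $\ker e$ and is \emph{normal} in $\pi_1(E)$; continuity of $e\circ q$ bounds the complexity of $q^{-1}(e^{-1}(K))=(e\circ q)^{-1}(K)$ above by that of $K$, while $\sigma^{-1}((e\circ q)^{-1}(K))=K$ forbids any genuine drop. Hence it suffices to produce, for each target pointclass, a subgroup of $\mathbb{Z}^{\omega}$ of exactly that type.

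Next I would reduce to ideals on $\omega$. The support map $\supp\colon\mathbb{Z}^{\omega}\to\Po(\omega)$, $a\mapsto\{n:a_n\neq 0\}$, is continuous (each condition $a_n\neq 0$ is clopen), and $S\mapsto\mathbf{1}_S$ is a continuous section of it. For an ideal $\mathcal{I}$ on $\omega$ the set $\{a:\supp(a)\in\mathcal{I}\}$ is a subgroup of $\mathbb{Z}^{\omega}$, because $\supp(a-b)\subseteq\supp(a)\cup\supp(b)$ while $\mathcal{I}$ is closed under subsets and finite unions; the two continuous maps then give it exactly the complexity of $\mathcal{I}$ as a subset of $\Po(\omega)\cong 2^{\omega}$. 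Thus the Borel part of the theorem reduces to exhibiting Borel ideals of each prescribed complexity. The base cases are concrete: $\Po(A)$ for coinfinite $A$ is true $\Pi^0_1$, and the ideal of finite sets is true $\Sigma^0_2$; the remaining true $\Sigma^0_\gamma$ ($\gamma\geq 2$) and true $\Pi^0_\gamma$ ($\gamma\geq 1$, $\gamma\neq 2$) I would obtain from (or reprove within) the theory of Borel ideals, building them by a transfinite recursion of Fubini sums and products so as to reach every countable level of both parities and to pass through limit stages. The two excluded pointclasses are forced rather than accidental: every ideal is a subgroup of the Polish group $(\Po(\omega),\triangle)$, so an open-subgroup-is-closed argument rules out true $\Sigma^0_1$, and the fact that a $G_{\delta}$ subgroup of a Polish group is closed (a dense $G_{\delta}$ subgroup is comeager, hence open by Pettis, hence the whole group) rules out true $\Pi^0_2$.

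For part (3) the abelian route is genuinely insufficient: the support construction yields at best co-analytic subgroups, such as the $\Pi^1_1$-complete subgroup of sequences whose support, indexing coordinates by $\omega^{<\omega}$, is a well-founded tree; a $\Sigma^1_1$ membership condition that is closed under the group operation cannot be manufactured from support ideals. Here I would exploit the nonabelian structure of $\pi_1(E)$: take a normal closure $\langle\langle S\rangle\rangle$ of a suitable closed set $S$ (the commutator subgroup being the canonical candidate), whose membership condition ``$w$ is a finite product of conjugates of elements of $S^{\pm1}$'' is a projection of a Borel relation over $\W$ and hence $\Sigma^1_1$, and then establish non-Borelness by a continuous reduction from a $\Sigma^1_1$-complete set.

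I expect two steps to carry the real weight. The first is the exact-complexity bookkeeping for the ideals: the upper bounds are routine, but the hardness lower bounds certifying that each constructed ideal lands at precisely the intended level---especially for the off-parity classes and at limit ordinals---rest on the finer combinatorics of Borel ideals and are the technical heart of parts (1) and (2). The second, and the main obstacle overall, is part (3): one must choose $S$ so that $\langle\langle S\rangle\rangle$ is simultaneously an honest normal subgroup, visibly $\Sigma^1_1$, and $\Sigma^1_1$-hard, and the reduction witnessing non-Borelness is exactly where the infinitary, nonabelian nature of $\pi_1(E)$ must be used with care.
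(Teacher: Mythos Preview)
Your reduction for parts (1) and (2) is essentially the paper's: the exponent map $e$ is exactly the composite $\pi_1(E)\xrightarrow{f_*}\pi_1(T^\infty)\simeq\prod_\omega\mathbb{Z}$ used there, and your section $\sigma$ plays the role of the paper's $\overline{k}$ and $\overline{j}$ (Lemmas \ref{Borellemma1} and \ref{Borellemma2}). The paper then simply invokes the Farah--Solecki theorem that every uncountable Polish group has true $\Sigma^0_\gamma$ and true $\Pi^0_\gamma$ subgroups in the stated ranges, applied to $\prod_\omega\mathbb{Z}$, whereas you propose to reprove this via support ideals and Fubini constructions. Your route is more self-contained and makes the examples concrete, at the cost of redoing the hardness bookkeeping; the paper's route is a one-line citation.

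For part (3), however, your premise is mistaken. You assert that ``the abelian route is genuinely insufficient'' and therefore reach for normal closures inside the nonabelian $\pi_1(E)$. In fact the abelian route \emph{is} sufficient and is precisely what the paper uses: by a consequence of Ding--Li \cite{DL}, every uncountable abelian Polish group---in particular $\prod_\omega\mathbb{Z}$---contains a true analytic subgroup $K$, and then $e^{-1}(K)$ is a true analytic normal subgroup of $\pi_1(E)$ by the same two-sided continuity argument you already set up. Your observation that support ideals only yield $\Pi^1_1$ examples is correct, but the conclusion should have been to look for other subgroups of $\mathbb{Z}^\omega$, not to abandon the abelian quotient. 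Your proposed nonabelian construction (taking $\langle\langle S\rangle\rangle$ for, e.g., $S$ generating the commutator subgroup) would require a separate $\Sigma^1_1$-hardness reduction that you do not supply, and it is both harder and unnecessary.
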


\noindent This is done by demonstrating a correspondence with the fundamental group of the infinitary torus, whose fundamental group is abelian and therefore isomorphic to its first homology.

\end{section}

\begin{section}{Preliminaries}\label{Preliminaries}

We introduce some of the terminology and notation for this paper, assuming some familiarity with the first fundamental group, $\pi_1$.  Let $(X, d)$ be a path connected metric space and $x\in X$.  Let $L_x$ denote the set of all loops based at $x$ and metrize $L_x$ by letting the distance between loops $l_0$ and $l_1$ be given by $\sup_{s\in [0,1]}d(l_0(s), l_1(s))$.  Where there are several spaces in the context we add a second subscript, $L_{x, X}$, to emphasize the space in which $x$ lies.  Let $*$ denote the concatenation operation for loops, so that by definition $[l_0][l_1] = [l_0*l_1]$ for $[l_0], [l_1]\in \pi_1(X, x)$.  If $X$ is separable (respectively a complete metric space) then $L_x$ is separable (completely metrized by the $\sup$ distance function).

We say a subgroup $G\leq \pi_1(X, x)$ is $\Po$ provided $\bigcup G$ is a $\Po$ subset of $L_x$, where $\Po$ is some topologically defined predicate.  In other words $G$ is open, closed, Borel, etc. provided the set of loops of $G$, $\bigcup G$, is respectively open, closed, Borel, etc.  in the loop space.  Recall that a \textbf{Polish space} is a separable, completely metrizable space.  As was noted, if $X$ is Polish then so is $L_x$.  If $Z$ is Polish we say that $Y\subset Z$ is \textbf{analytic} if there exists a Polish space $W$ and continuous function $f:W \rightarrow Z$ such that $f(W) = Y$.  All Borel subsets of a Polish space are also analytic, and the class of analytic sets is closed under countable unions, countable intersections, continuous images in a Polish space and continuous preimages.  The following lemma combines the statements of Lemmas 3.3-5 and Theorems 3.12, 3.13 and C from \cite{Co}:

\begin{lemma}\label{biglemma}  The following hold:
\begin{enumerate}

\item  If $X$ is path connected Polish and $K \subset L_x$ is analytic then the set $[K] \subset L_x$ of loops homotopic to an element of $K$ is analytic, and the generated subgroup $\langle [K]\rangle$ and the normal generated subgroup $\langle \langle [K] \rangle\rangle$ are both analytic as subgroups of $\pi_1(X, x)$.

\item  If $X$ is a path connected, locally path connected Polish space and $G \unlhd \pi_1(X, x)$ is analytic then either $\card(\pi_1(X, x)/G) \leq \aleph_0$ (in case $G$ is open) or  $\card(\pi_1(X, x)/G) = 2^{\aleph_0}$ (in case $G$ is not open).

\item  If $X$ is a Peano continuum and $G \unlhd \pi_1(X, x)$ is analytic then either $\pi_1(X, x)/G$ is finitely generated (in case $G$ is open) or of cardinality $2^{\aleph_0}$ (in case $G$ is not open).

\item  If $X$ is a Peano continuum there does not exist a strictly increasing sequence $\{G_n\}_{n\in\omega}$ of normal analytic subgroups of the fundamental group such that $\pi_1(X, x) = \bigcup_{n\in\omega} G_n$.

\end{enumerate}

\end{lemma}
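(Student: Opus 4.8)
The plan is to argue by contradiction, combining a Baire category observation in the loop space with the dichotomy of part (3). Suppose a strictly increasing chain $\{G_n\}_{n\in\omega}$ of normal analytic subgroups satisfies $\pi_1(X,x)=\bigcup_{n\in\omega}G_n$. Since $X$ is a Peano continuum it is compact metric, hence Polish, so the loop space $L_x$ is Polish (as recorded in the preliminaries) and in particular a Baire space. Every loop based at $x$ represents a class of $\pi_1(X,x)=\bigcup_n G_n$, so $L_x=\bigcup_{n\in\omega}\bigcup G_n$. Each $\bigcup G_n$ is analytic and therefore has the Baire property; since $L_x$ is non-meager in itself, one of the sets $\bigcup G_N$ must be non-meager.

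The next step, which I expect to be the crux, is to upgrade non-meagerness to openness: a non-meager analytic subgroup of $\pi_1(X,x)$ is open. This is a Pettis-type automatic-continuity statement, and it is precisely the kind of fact underpinning the dichotomies of parts (2) and (3) (cf. the lemmas of \cite{Co}). The subtlety is that $L_x$ is not a topological group under concatenation---left concatenation by a loop $\ell$, the map $m\mapsto \ell*m$, is continuous but not a homeomorphism onto its image, since inverting requires $\ell^{-1}*(\ell*m)$, which is only \emph{homotopic} to $m$---so the classical argument with literal translates does not transfer verbatim and one must work modulo the homotopy relation. Granting this principle, $G_N$ is open, whence part (3) yields that the quotient $Q:=\pi_1(X,x)/G_N$ is finitely generated.

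Finally I would extract the contradiction from finite generation. For $n\ge N$ we have $G_N\le G_n\subsetneq G_{n+1}\le \pi_1(X,x)$, so the images $G_n/G_N$ form a strictly increasing chain of proper subgroups of $Q$ whose union is all of $Q$. But a finitely generated group admits no such presentation: a finite generating set of $Q$ lies in a single $G_M/G_N$, forcing $Q=G_M/G_N$ and contradicting strict increase at stage $M$. Hence no chain $\{G_n\}_{n\in\omega}$ of the asserted kind can exist. The only genuine work lies in isolating and applying the non-meager-implies-open principle; once that is in hand, the category and finite-generation steps are routine.
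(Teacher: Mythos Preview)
The paper does not actually prove this lemma: it is stated without proof as a compilation of results from \cite{Co} (Lemmas~3.3--5 and Theorems~3.12, 3.13 and~C there), so there is no in-paper argument to compare against. Your proposal treats only part~(4), deriving it from part~(3) via a Baire-category step; parts~(1)--(3) are simply assumed.

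Your reduction for part~(4) is sound and is a natural strategy. The Baire step (some $\bigcup G_N$ is non-meager because analytic sets have the Baire property and $L_x$ is Polish) and the finite-generation endgame (a finitely generated group is not the union of a strictly increasing chain of proper subgroups) are both correct as written. You are also right that the only nontrivial ingredient is the implication ``non-meager analytic subgroup $\Rightarrow$ open subgroup'' in the loop-space setting, and that the classical Pettis argument does not transfer verbatim because $L_x$ is not a topological group under concatenation. You explicitly defer this to \cite{Co}, which is exactly what the paper does for the entire lemma; in that sense your write-up is strictly more informative than the paper's bare citation, but it does not close the one gap you yourself identify. If you want a self-contained proof of~(4), that Pettis-type step is precisely what must be supplied.
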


The following is a consequence of  Lemma 3.14 in \cite {Co}:

\begin{lemma}\label{smallloopslemma}  Suppose $X$ is a Peano continuum and $N\unlhd \pi_1(X, x)$ is such that $\bigcup N = \bigcup_{n = 0}^{\infty} N_n$ with each set $N_n$ closed under inverses and homotopy and containing the trivial loop, and $N_n*N_{m}\subseteq N_{n+m}$.  If each $N_n$ is analytic and for each $n\in \omega$ there exist loops at $x$ of arbitrarily small diameter not contained in $N_n$, then $\pi_1(X, x)/N$ is of cardinality $2^{\aleph_0}$.
\end{lemma}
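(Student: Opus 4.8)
The plan is to reduce the statement to the dichotomy in Lemma~\ref{biglemma}(3) by proving that $N$ is \emph{not} an open subgroup. First I would observe that $\bigcup N = \bigcup_{n} N_n$ is a countable union of analytic sets and hence analytic, so $N$ is an analytic normal subgroup of $\pi_1(X,x)$. Since $X$ is a Peano continuum, Lemma~\ref{biglemma}(3) then gives a clean alternative: if $N$ is open then $\pi_1(X,x)/N$ is finitely generated, and otherwise $\card(\pi_1(X,x)/N) = 2^{\aleph_0}$. As the desired conclusion is exactly the second alternative, it suffices to rule out the possibility that $N$ is open.

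Second I would argue by contradiction, assuming $\bigcup N$ is open in $L_x$. As the constant loop $c_x$ lies in $\bigcup N$, there is an $\epsilon>0$ with $B := \{l \in L_x : \sup_{s\in[0,1]} d(l(s),x) < \epsilon\} \subseteq \bigcup N$; here $B$ consists of precisely those loops whose image lies in the open ball of radius $\epsilon$ about $x$, so in particular every loop of diameter $<\epsilon$ based at $x$ belongs to $B$. Writing $B = \bigcup_{n}(N_n \cap B)$, each $N_n \cap B$ is analytic and therefore has the Baire property. Since $L_x$ is Polish and $B$ is a nonempty open subset, $B$ is a Baire space, so some $N_{n_0}\cap B$ is nonmeager and hence comeager in some nonempty open $V \subseteq B$.

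The crux is a Pettis-type step. Using that concatenation $*\colon L_x\times L_x\to L_x$ and reversal $l\mapsto \bar l$ are continuous, that each $N_n$ is closed under reversal and homotopy, and that $N_{n_0}*N_{n_0}\subseteq N_{2n_0}$, I would show that $N_{2n_0}$ contains a neighborhood of $c_x$, i.e.\ contains every loop of sufficiently small diameter. The idea mirrors the classical theorem that a nonmeager set $A$ with the Baire property in a topological group satisfies that $AA^{-1}$ is a neighborhood of the identity: here $\{a * \bar{a'} : a,a' \in N_{n_0}\} \subseteq N_{2n_0}$, and the comeagerness of $N_{n_0}\cap B$ in $V$ should force, for every sufficiently small loop $g$, a pair $a,a'$ with $g$ homotopic to $a*\bar{a'}$, whereupon homotopy-closedness of $N_{2n_0}$ places $g$ itself in $N_{2n_0}$. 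This contradicts the hypothesis that there exist loops of arbitrarily small diameter not contained in $N_{2n_0}$, and the contradiction shows $N$ is not open, completing the proof via Lemma~\ref{biglemma}(3).

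The main obstacle is precisely this last step, since $(L_x,*)$ is not literally a topological group: concatenation is associative, inverse, and unital only up to homotopy, so the classical Pettis theorem does not apply verbatim (for instance the left translation $l\mapsto g*l$ is continuous and injective but not a homeomorphism of $L_x$). The homotopy-closedness of the sets $N_n$ is what absorbs these discrepancies, as each relation that holds only up to homotopy is harmless when membership in $N_n$ is homotopy-invariant; making this absorption precise, and translating the neighborhood produced at the level of homotopy classes back to an honest neighborhood of small loops in $L_x$, is exactly the technical content I would import from Lemma~3.14 of \cite{Co}. I would expect the remaining bookkeeping (continuity of $*$ and of reversal in the sup metric, and the elementary category manipulations) to be routine.
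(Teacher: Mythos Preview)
The paper does not prove this lemma; it merely records it as a consequence of Lemma~3.14 in \cite{Co}. Your outline---observe that $\bigcup N=\bigcup_n N_n$ is analytic, invoke the dichotomy of Lemma~\ref{biglemma}(3), and rule out openness of $N$ via a Baire/Pettis argument whose technical core you yourself defer to Lemma~3.14 of \cite{Co}---is a plausible reconstruction, and since both you and the paper ultimately rest on the same cited external input, the approaches are aligned.
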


Recall the Hurewicz homomorphism $h:\pi_1(X, x) \rightarrow H_1(X)$ which takes a loop to its homology class.  Provided $X$ is path connected (which will be our assumption throughout this note) the map $h$ is onto and the kernel of $h$ is the commutator subgroup $[\pi_1(X, x), \pi_1(X, x)]$, and so $H_1(X)$ is isomorphic to the abelianization of the fundamental group.  It is known that if $X$ is path connected Polish then $[\pi_1(X, x), \pi_1(X, x)]$ is an analytic subgroup of $\pi_1(X, x)$ (see \cite{CoCo} or \cite{Co}).

\end{section}

\begin{section}{Structure theorems}\label{Structuretheorems}
We present some structure theorems regarding $H_1(X)$ where $X$ is a Peano continuum.  We mention that a couple of these results have slight generalizations to path connected locally, path connected Polish spaces (obtained by applying Lemma \ref{biglemma} part (2) rather than part (3)).  In particular, one has modifications of Theorems \ref{reduced} and \ref{inffree} which assume $X$ is a path connected, locally path connected Polish space and conclude that the quotient mentioned in the theorem is either of cardinality $\leq \aleph_0$ or $2^{\aleph_0}$.

The following theorem appeared in \cite{CoCo}:
\begin{theorem*}  If $X$ is a path connected, locally path connected Polish space then $H_1(X)$ is either of cardinality $\leq \aleph_0$ or $2^{\aleph_0}$.  In case such an $X$ is also compact, then $H_1(X)$ is either a finite direct sum of cyclic groups or of cardinality $2^{\aleph_0}$.
\end{theorem*}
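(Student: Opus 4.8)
The plan is to identify $H_1(X)$ with a quotient of $\pi_1(X,x)$ by a normal analytic subgroup and then feed this into the dichotomies already packaged in Lemma \ref{biglemma}. Fix a basepoint $x \in X$. Since $X$ is path connected, the Hurewicz homomorphism $h \colon \pi_1(X,x) \to H_1(X)$ is surjective with kernel the commutator subgroup $G = [\pi_1(X,x), \pi_1(X,x)]$, so $H_1(X) \cong \pi_1(X,x)/G$. As recalled in the preliminaries, $G$ is an analytic subgroup of $\pi_1(X,x)$ whenever $X$ is path connected Polish, and it is of course normal. Thus $H_1(X)$ is presented as the quotient of the fundamental group by a normal analytic subgroup, which is exactly the situation to which Lemma \ref{biglemma} applies.

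For the first assertion I would apply Lemma \ref{biglemma}(2): since $X$ is path connected, locally path connected Polish and $G \unlhd \pi_1(X,x)$ is analytic, we obtain either $\card(\pi_1(X,x)/G) \leq \aleph_0$ (when $G$ is open) or $\card(\pi_1(X,x)/G) = 2^{\aleph_0}$ (when $G$ is not open). Transporting this across the isomorphism $H_1(X) \cong \pi_1(X,x)/G$ gives the desired cardinality dichotomy for $H_1(X)$.

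For the second assertion, when $X$ is additionally compact it is a Peano continuum, so Lemma \ref{biglemma}(3) applies to the normal analytic subgroup $G$: either $\pi_1(X,x)/G$ is finitely generated (when $G$ is open) or has cardinality $2^{\aleph_0}$ (when $G$ is not open). In the first case $H_1(X)$ is a finitely generated abelian group, hence a finite direct sum of cyclic groups by the structure theorem for finitely generated abelian groups; in the second case $\card(H_1(X)) = 2^{\aleph_0}$. This yields the stated alternative.

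I expect no serious obstacle at this level, since the genuine content---that the quotient of the fundamental group of a (locally path connected) Polish space by a normal analytic subgroup obeys these cardinality and generation dichotomies---is precisely what Lemma \ref{biglemma} supplies. The only points requiring care are the verifications that the Hurewicz kernel is both analytic and normal (both already recorded above) and the elementary observation that a finitely generated abelian group decomposes as a finite direct sum of cyclics. Everything else is a direct transport of the dichotomy across the Hurewicz isomorphism.
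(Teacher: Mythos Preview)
Your argument is correct. Note, however, that the paper does not supply its own proof of this theorem: it is quoted from \cite{CoCo} and stated without proof. What you have written is precisely the natural derivation from the tools assembled in Section~\ref{Preliminaries}---the analyticity of the commutator subgroup together with the dichotomies of Lemma~\ref{biglemma}(2) and (3)---and the paper's remark that ``the upshot of the proof of this result is that $H_1(X)$ is uncountable precisely when $X$ has arbitrarily small loops which are not nulhomologous'' indicates that the original argument in \cite{CoCo} proceeds along the same lines (openness of the commutator subgroup being equivalent to the nonexistence of such small loops).
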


The upshot of the proof of this result is that $H_1(X)$ is uncountable precisely when $X$ has arbitrarily small loops which are not nulhomologous.  We give a theorem which can be interpreted as a type of small loop compactness, first giving a definition:

\begin{definition}  Recall that given a group $G$ and $g\in G$ the \textbf{commutator length} of $g$, which we will denote $\cl(g)$, is the smallest number $n$ such that $g$ can be written as a product of $n$ commutators.  We write $\cl(g) = \infty$ in case $g$ is not in the commutator subgroup of $G$.
\end{definition}

\begin{theorem}\label{smallloopcompactness}  If $X$ is a Peano continuum with $H_1(X)$ of cardinality $< 2^{\aleph_0}$ then there exists $\epsilon>0$ and $N\in \omega$ such that any loop of of diameter less than $\epsilon$ is of commutator length $\leq N$.
\end{theorem}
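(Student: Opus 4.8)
The plan is to argue by contradiction using Lemma \ref{smallloopslemma}, taking $N$ to be the commutator subgroup $[\pi_1(X,x), \pi_1(X,x)]$ and filtering it by commutator length. Since $X$ is a Peano continuum it is compact metric, hence Polish, so $L_x$ is Polish; and since $X$ is path connected the Hurewicz map identifies $H_1(X)$ with $\pi_1(X,x)/N$. For each $n\in\omega$ let $N_n\subseteq L_x$ be the set of loops whose homotopy class has commutator length $\leq n$. One readily checks the combinatorial hypotheses of Lemma \ref{smallloopslemma}: each $N_n$ is closed under inverses (the inverse of a product of $n$ commutators is again a product of $n$ commutators) and under homotopy (commutator length is a class function), contains the trivial loop, satisfies $N_n * N_m \subseteq N_{n+m}$ because $\cl(gh)\leq \cl(g)+\cl(h)$, and $\bigcup_{n}N_n = \bigcup N$ since every nullhomologous loop has finite commutator length.

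The key step is to verify that each $N_n$ is analytic. I would realize $N_n$ as the homotopy-closure of an explicit analytic set. Reversal $l\mapsto \bar{l}$ (with $\bar{l}(s)=l(1-s)$) is an isometry of $L_x$ and concatenation $*\colon L_x\times L_x\to L_x$ is continuous in the $\sup$-metric, so the word map $\Phi_n\colon (L_x)^{2n}\to L_x$ sending $(g_1,h_1,\ldots,g_n,h_n)$ to $g_1 * h_1 * \bar{g_1} * \bar{h_1} * \cdots * g_n * h_n * \bar{g_n} * \bar{h_n}$ is continuous. Its image $K_n := \Phi_n\big((L_x)^{2n}\big)$ is therefore a continuous image of a Polish space, hence analytic, and (allowing trivial factors $g_i=h_i$) it consists of representatives of all homotopy classes of commutator length $\leq n$. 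By Lemma \ref{biglemma}(1) the homotopy-closure $[K_n]$ is analytic, and $[K_n]=N_n$.

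Finally, suppose the conclusion fails, so that for every $\epsilon>0$ and every $N\in\omega$ there is a loop of diameter $<\epsilon$ and commutator length $>N$. Fixing $n$ and letting $\epsilon\to 0$ with $N=n$ shows that for each $n$ there exist loops of arbitrarily small diameter lying outside $N_n$. All hypotheses of Lemma \ref{smallloopslemma} are then met, whence $\card(H_1(X))=\card(\pi_1(X,x)/N)=2^{\aleph_0}$, contradicting the assumption $\card(H_1(X))<2^{\aleph_0}$.

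I expect the analyticity of the $N_n$ to be the only genuine obstacle; it rests on the continuity of the loop-word operations (concatenation and reversal) together with Lemma \ref{biglemma}(1), while the structural conditions and the final contrapositive bookkeeping are routine.
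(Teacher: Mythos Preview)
Your argument is essentially the paper's own: filter the commutator subgroup by commutator length, verify each $N_n$ is analytic via the continuous commutator-word map together with Lemma \ref{biglemma}(1), and then invoke Lemma \ref{smallloopslemma}. The combinatorial checks and the analyticity argument are exactly as in the paper.

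There is, however, one genuine omission. You fix a basepoint $x$ at the outset and work in $L_x$, but the negation of the theorem only supplies, for each $\epsilon$ and $N$, a loop of diameter $<\epsilon$ and commutator length $>N$ based at \emph{some} point of $X$; Lemma \ref{smallloopslemma} requires small loops lying in $L_x$ itself. The paper fills this in: pick loops $l_n$ with $\diam(l_n)\leq 2^{-n}$ and $\cl([l_n])>n$, use compactness of $X$ to pass to a subsequence whose basepoints converge to a point $x$, and then use local path connectedness to conjugate each $l_n$ by a short path from $x$ to its basepoint. Conjugation is a group isomorphism, so commutator length is unchanged, and the conjugated loops can be made to have arbitrarily small diameter. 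With this rebasing step inserted before the final paragraph, your proof is complete and coincides with the paper's.
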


In other words, if there exist arbitrarily small loops of arbitrarily long commutator length, then $\card(H_1(X)) = 2^{\aleph_0}$.  Thus arbitrarily small loops of infinite commutator length exist whenever there are arbitrarily small loops of arbitrarily large commutator length.

\begin{proof}  Suppose the conclusion fails.  We may then pick a sequence of loops $\{l_n\}_{n\in \omega}$ such that $\diam(l_n)\leq 2^{-n}$ and the commutator length of $[l_n] \in \pi_1(X, x_n)$ is greater than $n$.  As $X$ is compact and locally path connected, we may pass to a subsequence if necessary and assume that all loops are based at the same point, say $x$.  Letting $N_n$ be the set of all loops at $x$ which are of commutator length at most $n$, we have that $\bigcup [\pi_1(X, x), \pi_1(X, x)] = \bigcup_{n\in \omega} N_n$.  It is clear that $N_n*N_{m} \subseteq N_{m+n}$ and so we shall be done by Lemma \ref{smallloopslemma} if we show that the set $N_n$ of products of $n$ or fewer commutators is analytic.

To see that $N_n$ is analytic we let $f: \prod_{2n }L_x \rightarrow L_x$ be the map $$f(l_0, l_1, \ldots , l_{2n - 1}) = l_0*l_1*(l_0)^{-1}*(l_1)^{-1}*\cdots *l_{2n-2}*l_{2n-1}*(l_{2n-2})^{-1}*(l_{2n-1})^{-1}$$ and notice that $[f (\prod_{2n }L_x)] = N_n$ and we are done by Lemma \ref{biglemma} part (1).
\end{proof}

We move on to some other structure theorems for first homology of a Peano continuum.  If $A$ is an abelian group let $\tor(A)$ denote the subgroup of $A$ consisting of the torsion elements.  Let $\torfree(A)$ denote the quotient $A/\tor(A)$.  It is not always the case that $\tor(A)$ is a direct summand, even if $A = H_1(X)$ for some Peano continuum $X$, by the following example (adapted from an example in \cite{Fu}):

\begin{example}\label{primeexample}  Let $P$ be the set of primes and for each $p\in P$ let $X_p$ be a Peano continuum with fundamental group isomorphic to $\mathbb{Z}/p$.  Then $X = \prod_{p\in P} X_p$ has fundamental group isomorphic to $\prod_{p\in P} \mathbb{Z}/p$, so $H_1(X) \simeq \prod_{p\in P} \mathbb{Z}/p$ as well.

Suppose $a\in \prod_{p\in P} \mathbb{Z}/p$ is torsion.  If the $p$ coordinate of $a$ is nonzero then $p$ divides the order of $a$, so in particular $a$ must have finite support.  Conversely any finite supported element is torsion, so $\tor(\prod_{p\in P} \mathbb{Z}/p) = \bigoplus_{p\in P} \mathbb{Z}/p$.  Now for $a = (1,1,1,,\ldots) \in \prod_{p\in P} \mathbb{Z}/p$ we have that for any $n\in \omega$ there exists $b \in \prod_{p\in P} \mathbb{Z}/p$ such that $b^n = a$ in $\prod_{p\in P} \mathbb{Z}/p/\bigoplus_{p\in P} \mathbb{Z}/p$.  This is seen by letting $b(p)$ be such that $b(p)^n = 1\mod p$ for all $p$ that do not divide $n$ and $b(p) = 0$ otherwise.  In $\prod_{p\in P} \mathbb{Z}/p$ there is no nontrivial element which has all roots (for example a $p$-th root would not exist if the $p$-th coordinate is not zero).  Thus $\bigoplus_{p\in P} \mathbb{Z}/p$ cannot be a direct summand.
\end{example}

\begin{theorem}\label{modtor}  If $X$ is a Peano continuum then $\torfree(H_1(X))$ is a finite rank free abelian group or of cardinality $2^{\aleph_0}$.
\end{theorem}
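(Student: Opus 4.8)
The plan is to reduce the statement to Lemma \ref{biglemma} part (3) by producing a normal analytic subgroup of $\pi_1(X, x)$ whose quotient is exactly $\torfree(H_1(X))$. Let $h\colon \pi_1(X, x)\to H_1(X)$ be the Hurewicz homomorphism, write $T=\tor(H_1(X))$, and set $G=h^{-1}(T)$. Since $T$ is a subgroup of the abelian group $H_1(X)$ and $h$ is a surjective homomorphism, $G$ is normal in $\pi_1(X, x)$, and $h$ induces an isomorphism $\pi_1(X, x)/G\cong H_1(X)/T=\torfree(H_1(X))$. Thus it suffices to show that $G$ is analytic: Lemma \ref{biglemma} part (3) then gives that $\pi_1(X, x)/G$ is either finitely generated or of cardinality $2^{\aleph_0}$, and a finitely generated torsion-free abelian group is free abelian of finite rank, which is precisely the asserted dichotomy.

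The key step, and the one I expect to be the main obstacle, is verifying that $G$ is analytic. The point is that a class $h([l])$ is torsion in $H_1(X)$ exactly when some power $[l]^n$ lies in $\ker h=[\pi_1(X, x), \pi_1(X, x)]$, so that $\bigcup G=\bigcup_{n\geq 1}\{l\in L_x: [l]^n\in [\pi_1(X, x), \pi_1(X, x)]\}$. For each fixed $n$ I would consider the $n$-fold power map $p_n\colon L_x\to L_x$ given by $p_n(l)=l*l*\cdots *l$ ($n$ factors); this map is continuous (indeed distance-preserving) for the sup metric. Since $\bigcup [\pi_1(X, x), \pi_1(X, x)]$ is saturated under homotopy, a loop $m$ lies in it precisely when $[m]\in [\pi_1(X, x), \pi_1(X, x)]$; applying this to $m=p_n(l)$, for which $[m]=[l]^n$, yields $\{l: [l]^n\in [\pi_1(X, x), \pi_1(X, x)]\}=p_n^{-1}(\bigcup [\pi_1(X, x), \pi_1(X, x)])$.

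Now I would invoke that the commutator subgroup is analytic (recalled at the end of Section \ref{Preliminaries}), so that $\bigcup [\pi_1(X, x), \pi_1(X, x)]$ is an analytic subset of $L_x$. Because continuous preimages of analytic sets are analytic, each $p_n^{-1}(\bigcup [\pi_1(X, x), \pi_1(X, x)])$ is analytic, and $\bigcup G$ is their countable union, hence analytic; this is where the closure properties of the analytic class do the work. That completes the verification that $G$ is a normal analytic subgroup and therefore the proof. The only care needed is the bookkeeping that $\bigcup G$ genuinely decomposes as claimed---that a loop's homology class is torsion iff one of its concatenation powers is nullhomologous---and that every passage between loops and homotopy classes is legitimate because each relevant subset of $L_x$ is a union of homotopy classes.
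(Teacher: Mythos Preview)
Your proof is correct and follows essentially the same route as the paper's own argument: you identify the kernel of $\pi_1(X,x)\to\torfree(H_1(X))$, express its union of loops as $\bigcup_{n\geq 1} p_n^{-1}\bigl(\bigcup[\pi_1(X,x),\pi_1(X,x)]\bigr)$, use that the commutator subgroup is analytic together with closure of analytic sets under continuous preimages and countable unions, and conclude via Lemma~\ref{biglemma}(3). The only cosmetic difference is that you spell out a bit more of the bookkeeping (and the remark that $p_n$ is distance-preserving rather than merely continuous).
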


\begin{proof}  It suffices to show that the kernel of the map $$\phi: \pi_1(X) \rightarrow H_1(X)/\tor(H_1(X))$$ is analytic.  Notice that $l\in \bigcup \ker(\phi)$ if and only if $(\exists n \in \omega\setminus \{0\})[l^n \in \bigcup [\pi_1(X), \pi_1(X)]]$.  Since $\bigcup [\pi_1(X), \pi_1(X)]$ is analytic and each map $l \mapsto l^n$ is continuous we have that $\ker(\phi)$ is a countable union of continuous preimages of analytic sets, and is therefore analytic.  If $\torfree(H_1(X))$ is not of cardinality $2^{\aleph_0}$ we know it is a finitely generated torsion-free group, and therefore free abelian of finite rank.
\end{proof}

In case $\torfree(H_1(X))$ is a finite rank free abelian group we get the splitting of the short exact sequence of abelian groups: $$0 \rightarrow \tor(H_1(X)) \rightarrow H_1(X) \rightarrow \torfree(H_1(X)) \rightarrow 0$$ so that $H_1(X) \simeq \tor(H_1(X)) \oplus \bigoplus_{m=0}^n \mathbb{Z}$.

Recall that a torsion group is a $p$-group if the order of every element is divisible by $p$.  A basic fact about torsion abelian groups is the following (Theorem 2.1 in \cite{Fu}):

\begin{theorem*}  Every abelian torsion group $A$ may be decomposed into a direct sum of the $p$-groups $A_p \leq A$ where $A_p$ is the subgroup of elements whose order is divisible by $p$.
\end{theorem*}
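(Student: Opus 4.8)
The plan is to establish the classical primary decomposition by realizing $A$ as the internal direct sum of its $p$-primary components. For each prime $p$, let $A_p$ denote the set of elements of $A$ whose order is a power of $p$ (this is the intended meaning of the $A_p$ in the statement). First I would check that each $A_p$ is a subgroup: if $a, b \in A_p$ have orders $p^i$ and $p^j$, then $p^{\max(i,j)}(a-b) = 0$, so $a - b$ again has order a power of $p$. Thus $A_p \leq A$, and it is a subgroup all of whose elements have $p$-power order.

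The next step is to show that the $A_p$ together generate $A$, i.e. $A = \sum_p A_p$. Given $a \in A$ of order $n$, I would factor $n = p_1^{e_1}\cdots p_k^{e_k}$ and set $n_i = n/p_i^{e_i}$. Since $\gcd(n_1, \ldots, n_k) = 1$, there are integers $m_1, \ldots, m_k$ with $\sum_{i=1}^k m_i n_i = 1$, whence $a = \sum_{i=1}^k m_i n_i a$. Because $p_i^{e_i}\cdot n_i a = na = 0$, each summand $m_i n_i a$ has order dividing $p_i^{e_i}$ and hence lies in $A_{p_i}$; therefore $a \in \sum_p A_p$.

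Finally I would verify that the sum is direct. Suppose $a_{p_1} + \cdots + a_{p_k} = 0$, where the $p_i$ are distinct primes and $a_{p_i} \in A_{p_i}$. Fixing an index $j$, the element $a_{p_j} = -\sum_{i \neq j} a_{p_i}$ has, on the one hand, order a power of $p_j$, and on the other hand order dividing the product of the orders of the remaining terms, a number coprime to $p_j$. An element whose order is simultaneously a power of $p_j$ and coprime to $p_j$ must be trivial, so $a_{p_j} = 0$; as $j$ was arbitrary, the only representation of $0$ is the trivial one. Combined with the preceding paragraph this yields $A = \bigoplus_p A_p$.

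There is no serious obstacle here, as this is a standard result; the only points that require care are the B\'ezout/coprimality manipulation used in the generation step and the ``coprime orders force triviality'' argument underpinning directness. Both reduce to elementary facts about orders of elements in an abelian group.
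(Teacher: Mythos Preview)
Your argument is correct and is exactly the standard primary-decomposition proof. Note, however, that the paper does not itself prove this statement: it is quoted verbatim as Theorem~2.1 of Fuchs, \emph{Infinite Abelian Groups}, and used as a black box, so there is no paper-internal proof to compare against. Your reading of $A_p$ as the $p$-primary component (elements of $p$-power order) is the correct one; the phrase ``order is divisible by $p$'' in the quoted statement is a slip, and your proof handles the intended object.
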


\begin{A}  Let $X$ be a Peano continuum.  If $\card(\torfree(H_1(X)))<2^{\aleph_0}$ then $H_1(X)$ is a direct sum of cyclic groups and $\tor(H_1(X))$ is of bounded exponent.
\end{A}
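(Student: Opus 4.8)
The plan is to reduce both conclusions to the single statement that $T = \tor(H_1(X))$ has bounded exponent. First I would combine the hypothesis with Theorem \ref{modtor}: since $\card(\torfree(H_1(X))) < 2^{\aleph_0}$, the group $\torfree(H_1(X))$ is free abelian of finite rank, so (as already recorded after Theorem \ref{modtor}) the defining short exact sequence splits and $H_1(X) \simeq T \oplus F$ with $F \cong \bigoplus_{i=1}^{k}\mathbb{Z}$ for some finite $k$. Recall the theorem of Pr\"ufer and Baer that a bounded abelian group is a direct sum of cyclic groups (see \cite{Fu}). Hence, once $T$ is shown to be of bounded exponent, $T$ is a direct sum of cyclic groups and $H_1(X) \simeq T \oplus F$ is a direct sum of cyclic groups; this delivers both conclusions at once.

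To prove $T$ bounded I would argue by contradiction using Lemma \ref{biglemma} part (4). Fix a basis $b_1,\ldots,b_k$ of $F$ and loops $\beta_1,\ldots,\beta_k$ at $x$ with $h([\beta_i]) = b_i$. For $n\in\omega$ put $T[n!] = \{t\in T : (n!)t = 0\}$ and define
\[
G_n = h^{-1}\big(T[n!]\oplus F\big)\leq \pi_1(X,x).
\]
Each $G_n$ contains $\ker(h) = [\pi_1(X,x),\pi_1(X,x)]$, hence is normal; the chain $\{G_n\}$ is increasing; and $\bigcup_n G_n = h^{-1}(T\oplus F) = \pi_1(X,x)$ because every torsion element of $T$ is annihilated by some $n!$. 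If $T$ were unbounded then $T[n!]\neq T$ for all $n$, so the chain $\{T[n!]\}$ never stabilizes and, after passing to a subsequence, $\{G_n\}$ is strictly increasing with union $\pi_1(X,x)$.

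The crucial point is that each $G_n$ is analytic. One checks that $h([l])\in T[n!]\oplus F$ if and only if $(n!)h([l]) = h([l^{n!}])\in F$, and the latter holds exactly when there is some $\bar m = (m_1,\ldots,m_k)\in\mathbb{Z}^k$ with
\[
\big[\, l^{n!} * \beta_k^{-m_k} * \cdots * \beta_1^{-m_1}\,\big]\in [\pi_1(X,x),\pi_1(X,x)].
\]
For each fixed $\bar m$ the assignment $l\mapsto l^{n!}*\beta_k^{-m_k}*\cdots*\beta_1^{-m_1}$ is a continuous self-map of $L_x$, so the associated set of loops is the continuous preimage of the analytic set $\bigcup[\pi_1(X,x),\pi_1(X,x)]$ and is therefore analytic. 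Since $\bigcup G_n$ is the union, over the countable index set $\mathbb{Z}^k$, of these analytic sets, $G_n$ is analytic. A strictly increasing sequence of normal analytic subgroups exhausting $\pi_1(X,x)$ is forbidden by Lemma \ref{biglemma} part (4), a contradiction; hence $T$ is bounded, completing the argument.

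I expect the analyticity of the $G_n$ to be the main obstacle. The naive torsion filtration $h^{-1}(T[n!])$ has union only $h^{-1}(T)$, a \emph{proper} subgroup of $\pi_1(X,x)$ whose quotient is $\torfree(H_1(X))$, so Lemma \ref{biglemma} part (4) does not apply to it directly; one must enlarge each term by the free part $F$ to exhaust the whole fundamental group. It is precisely the finite generation of $F$ that allows the membership condition to be rewritten as a countable union of continuous-preimage conditions, which is what keeps each $G_n$ analytic and makes the contradiction available.
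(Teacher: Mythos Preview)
Your argument is correct and follows the same overall strategy as the paper: split off the finite-rank free part using Theorem \ref{modtor}, exhibit an ascending chain of normal analytic subgroups exhausting $\pi_1(X,x)$, invoke Lemma \ref{biglemma} part (4) to force stabilization and hence bounded exponent for $\tor(H_1(X))$, then conclude via Pr\"ufer's theorem.

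The one notable difference is your choice of filtration. The paper proceeds in two stages via the primary decomposition: first it runs the chain argument on the subgroups $\bigoplus_{m=0}^{k}\mathbb{Z}\oplus H_1(X)_{p_0}\oplus\cdots\oplus H_1(X)_{p_j}$ to show that only finitely many primes occur in $\tor(H_1(X))$, and then for each surviving prime it runs a second chain argument on the $p^q$-torsion layers to bound each $p$-component. Your single filtration by $T[n!]\oplus F$ accomplishes both steps at once, since $(n!)$ eventually absorbs every prime power. This is a genuine streamlining: it avoids invoking the primary decomposition theorem and reduces the argument to a single application of Lemma \ref{biglemma} part (4). The paper's route, on the other hand, yields the slightly finer intermediate information that $\tor(H_1(X))$ has only finitely many nontrivial primary components, though of course this also follows a posteriori from bounded exponent.
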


\begin{proof}  Let $P = \{p_0, p_1, \ldots\}$ be the set of prime numbers.  Notice that for each prime $p$ the $p$-subgroup $H_1(X)_p \leq H_1(X)$ is such that $[l]\in \pi_1(X)$ maps to $H_1(X)_p$ if and only if $(\exists k\in \omega)[l^{p^k} \in \bigcup [\pi_1(X), \pi_1(X)]]$.  Then the kernels of each of the maps $\phi_p: \pi_1(X) \rightarrow H_1(X)/ H_1(X)_p$ are analytic subgroups.  Writing $H_1(X) \simeq \tor(H_1(X)) \oplus \bigoplus_{m=0}^k\mathbb{Z}$ we select loops $l_0, \ldots, l_k$ such that $[l_k]$ generates the $k$-th copy of $\mathbb{Z}$ in $\bigoplus_{m=0}^k\mathbb{Z}$.  Then the map $\pi_1(X) \rightarrow H_1(X)/\bigoplus_{m=0}^k\mathbb{Z}$ has kernel which is precisely $\langle \{[l_0], \ldots, [l_n]\}\cup [\pi_1(X), \pi_1(X)]\rangle$, and so this kernel is also analytic.

Now the kernels of the maps $$\pi_1(X) \rightarrow H_1(X)/(\bigoplus_{m=0}^k\mathbb{Z}\oplus H_1(X)_{p_0} \oplus\cdots\oplus H_1(X)_{p_j})$$ are all analytic, and their union letting $j \rightarrow \infty$ is all of $\pi_1(X)$.  Thus by Lemma \ref{biglemma} part (4) we know this ascending sequence eventually stabilizes.  Then we have $H_1(X) \simeq \bigoplus_{m=0}^k\mathbb{Z}\oplus H_1(X)_{p_0} \oplus\cdots\oplus H_1(X)_{p_j}$ for some $j\in \omega$.  For a torsion abelian group $A$ write $A_{p^{q}}$ for the subgroup of those elements whose order divides $p^q$, where $p$ is a prime.  We similarly have that the kernels of the maps $$\pi_1(X) \rightarrow H_1(X)/(\bigoplus_{m=0}^k\mathbb{Z}\oplus H_1(X)_{p_0^q} \oplus H_1(X)_{p_1}\cdots\oplus H_1(X)_{p_j})$$ are analytic for all $q\in \omega\setminus \{0\}$.  The union of all these subgroups is the whole of $\pi_1(X)$, so the sequence must stabilize.  Proceeding similarly for $p_1, p_2,$ etc., we get that $\tor(H_1(X))$ is a finite direct sum of $p$-subgroups, each of bounded power.  Thus $\tor(H_1(X))$ is a torsion group of bounded power, and so is a direct sum of cyclic groups by a theorem of Pr{\"u}fer (see \cite{Fu} Theorem 11.2).
\end{proof}

A theorem of Kulikov (see Theorem 12.2 of \cite{Fu}) states that subgroups of direct sums of cyclic groups are direct sums of cyclic groups. Thus if $\card(\torfree(H_1(X)))$ $<2^{\aleph_0}$ we see that $H_1(X)$ has no nontrivial divisible subgroup (a group is divisible if each element has $n$-th roots for every $n \in \omega \setminus \{0\}$).

Since the divisible abelian groups are injective, each torsion-free abelian group $A$ decomposes as a direct sum $A = \Div(A) \oplus \red(A)$ where $\Div(A)$ is the maximal divisible group in $A$ and $R$ is a reduced subgroup (i.e. contains no nontrivial divisible subgroups.)  The reduced subgroup is not necessarily unique as a subgroup of $A$, but is unique up to isomorphism as it is isomorphic to $A/\Div(A)$.  

\begin{theorem}\label{reduced}  If $X$ is a Peano continuum then $\red(\torfree(H_1(X)))$ is either a free abelian group of finite rank or of cardinality $2^{\aleph_0}$.
\end{theorem}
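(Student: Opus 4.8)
The plan is to realize $\red(\torfree(H_1(X)))$ as a quotient of $\pi_1(X)$ by an analytic normal subgroup and then invoke Lemma \ref{biglemma} part (3). Composing the Hurewicz map with the quotient onto $\torfree(H_1(X))$ and then onto $\torfree(H_1(X))/\Div(\torfree(H_1(X)))$ gives a surjection $\psi \colon \pi_1(X) \to \torfree(H_1(X))/\Div(\torfree(H_1(X)))$ with $\pi_1(X)/\ker(\psi) \cong \red(\torfree(H_1(X)))$. Once $\ker(\psi)$ is shown to be analytic, Lemma \ref{biglemma} part (3) gives that this quotient is either finitely generated or of cardinality $2^{\aleph_0}$; since it is torsion-free, the finitely generated case forces it to be free abelian of finite rank, which is exactly the desired dichotomy. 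So the whole theorem reduces to verifying that $\bigcup \ker(\psi)$ is an analytic subset of $L_x$.

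To do this I would first record a description of the maximal divisible subgroup that avoids set-theoretic quantifiers. For a torsion-free abelian group $A$ and $a \in A$, I claim $a \in \Div(A)$ if and only if there is a \emph{coherent system of roots}, i.e. a sequence $c_0, c_1, c_2, \ldots \in A$ with $c_0 = a$ and $(n+1)c_{n+1} = c_n$ for all $n$. Indeed, if such a sequence exists then the subgroup it generates is divisible (each element is an integer multiple of some $c_M$, and every $c_M$ is divisible by every positive integer within the subgroup via $c_M = (M+1)\cdots(M+j)\,c_{M+j}$), so $a$ lies in a divisible subgroup and hence in $\Div(A)$; conversely, divisibility of $\Div(A)$ lets one choose the roots inductively. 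A subtlety motivating this step is that one cannot simply use $\bigcap_n nA$, which in general properly contains $\Div(A)$.

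Translating through $\psi$, let $T = \ker(\pi_1(X) \to \torfree(H_1(X)))$, which is analytic by the proof of Theorem \ref{modtor}. Then $l \in \bigcup \ker(\psi)$ exactly when there are loops $m_0, m_1, \ldots \in L_x$ with $m_0 * l^{-1} \in \bigcup T$ and $(m_{n+1})^{n+1} * (m_n)^{-1} \in \bigcup T$ for every $n \in \omega$. Working in the Polish space $W = L_x \times \prod_{n\in\omega} L_x$, each of these countably many conditions is the continuous preimage of the analytic set $\bigcup T$ (concatenation, inversion, and iterated powers being continuous in the sup metric), so their intersection $B \subseteq W$ is analytic. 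Since $\bigcup \ker(\psi)$ is the projection of $B$ onto the $L_x$-coordinate, and projection along a Polish factor is continuous, $\bigcup \ker(\psi)$ is analytic, completing the reduction.

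The main obstacle is the second step: pinning down the correct elementary characterization of membership in the maximal divisible subgroup (rather than in $\bigcap_n nA$) and arranging the infinitely many root conditions so that a single existential projection over a fixed Polish space captures the subgroup without leaving the analytic class. Once the coherent-roots reformulation is in hand, the analyticity bookkeeping and the appeal to Lemma \ref{biglemma} part (3) are routine.
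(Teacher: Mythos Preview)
Your argument is correct and follows the same architecture as the paper's proof: exhibit the kernel of $\pi_1(X)\to\red(\torfree(H_1(X)))$ as an analytic normal subgroup and invoke Lemma~\ref{biglemma}(3), then observe that a finitely generated torsion-free abelian group is free of finite rank. The difference lies in how membership in $\Div(\torfree(H_1(X)))$ is detected at the loop level. The paper uses the condition $(\forall n\in\omega\setminus\{0\})(\exists l_1\in L_x)\,[(l_1)^n l^{-1}\in\bigcup K_1]$, which literally characterizes the first Ulm subgroup $\bigcap_{n\geq 1} nA$ rather than the maximal divisible subgroup; as you explicitly note, these can differ for torsion-free abelian groups. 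Your coherent-roots description $c_0=a$, $(n+1)c_{n+1}=c_n$ genuinely pins down $\Div(A)$, and the verification that the subgroup generated by the $c_n$ is divisible (via divisibility of products of consecutive integers by $n!$) is clean. The cost is modest: one extra Polish factor $\prod_{\omega}L_x$ and a projection, both of which stay within the analytic class. So your route is essentially the paper's, but with the delicate step handled more carefully.
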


\begin{proof}  By Lemma \ref{biglemma} part (3) we need only show that the kernel $K$ of the map $\pi_1(X) \rightarrow \red(\torfree(H_1(X)))$ is analytic.  We have already seen that the kernel $K_1$ of the map $\pi_1(X) \rightarrow \torfree(H_1(X))$ is analytic.  Now $l$ is in $\bigcup K$ if and only $(\forall n \in \omega\setminus \{0\})(\exists l_1 \in L_x)[(l_1)^nl^{-1} \in \bigcup K_1]$.  Then $K$ is an analytic subgroup as a countable intersection of countinuous preimages of analytic subsets of $L_x$.
\end{proof}

From Example \ref{primeexample} where $\pi_1(X) \simeq \prod_{p\in P}\mathbb{Z}/p$ we know each element of the group $\torfree(H_1(X))$ is divisible, by the same explanation as given for $(1,1, \ldots)$.  Thus $\torfree(H_1(X))$ is a torsion-free divisible group of cardinality $2^{\aleph_0}$.  Thus by considering $\torfree(H_1(X))$ as a vector space over $\mathbb{Q}$ we may select a basis, so that $\torfree(H_1(X)) \simeq \bigoplus_{2^{\aleph_0}} \mathbb{Q}$.  In this case the reduced summand of $\torfree(H_1(X))$ is trivial, and by taking a product of $X$ with a finite dimensional torus $T^n$ we easily get that $\torfree(H_1(X \times T^n)) \simeq \bigoplus_{2^{\aleph_0}} \mathbb{Q} \oplus \prod_{i=0}^{n-1} \mathbb{Z}$.  By taking the product of $X$ with the infinite torus $T^{\infty}$ we get $\torfree(H_1(X \times T^n)) \simeq \bigoplus_{2^{\aleph_0}} \mathbb{Q} \oplus \prod_{n\in \omega} \mathbb{Z}$, so all cases of Theorem \ref{reduced} obtain.

Given an abelian group $A$ there is a largest quotient $A/B$ with no nontrivial infinitely divisible elements, i.e. elements $a \neq 0$ such that for some $b\in A/B$we have $nb =a$ in $A/B$ for infinitely many $n\in \omega$.  To see this, let $S_0$ be the set of infinitely divisible elements in $A$.  Let $S_1$ be the set of those elements in $A$ which map under the quotient map $A \rightarrow A/\langle S_0\rangle$ to an infinitely divisible element.  In general let $S_{\alpha+1}$ be the set of those elements which map to an infinitely divisible element under the quotient map $A\mapsto A/\langle S_{\alpha} \rangle$ and $S_{\beta} = \bigcup_{\alpha<\beta} S_{\alpha}$ for $\beta$ a limit ordinal.  It is clear that $S_0 \subseteq \langle S_0\rangle \subseteq S_1 \subseteq \langle S_1\rangle \subseteq \cdots$.  The process must stabilize eventually since $A$ is a set, say at step $\beta$.  The subgroup $B =  \bigcup_{\alpha<\beta} \langle S_{\alpha}\rangle$ is evidently such that $A/B$ has no infinitely divisible elements and any homomorphism to an abelian group with no infinitely divisible elements must evidently contain $B$.   Let $\inffree(A)$ denote this maximal quotient with no infinitely divisible elements.

For the next result we state Theorem 4.4 in \cite{CC}:

\begin{theorem*}  Let $X$ be a topological space, let $\phi:\pi_1(X, x_0) \rightarrow L$ a be a homomorphism to the group $L$, $U_0\supseteq U_1 \supseteq \cdots$ be a countable local basis for $X$ at $x_0$ and $G_i$ be the image of the natural map of $\pi_1(U_i, x_0)$ into $\pi_1(X, x_0)$.  If $L$ is of cardinality $<2^{\aleph_0}$ and abelian with no infinitely divisible elements then $\phi(G_n) = 0$ for some $n\in \omega$.
\end{theorem*}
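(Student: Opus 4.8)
The plan is to argue by contradiction and to route the problem through the Hawaiian earring $E$, for which infinite concatenations are completely understood. Suppose $\phi(G_n)\neq 0$ for every $n\in\omega$. Then for each $n$ I can choose a loop $\ell_n$ based at $x_0$ whose image lies in $U_n$ and with $a_n:=\phi([\ell_n])\neq 0$. Because $U_0\supseteq U_1\supseteq\cdots$ is a local basis at $x_0$, the loops $\ell_n$ form a null sequence, so sending the $n$-th circle $c_n$ of $E$ to $\ell_n$ defines a continuous map $\iota\colon E\to X$ (continuity at the wedge point is exactly the statement that the $\ell_n$ eventually lie in every neighbourhood of $x_0$). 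Postcomposing the induced homomorphism $\iota_*$ with $\phi$ gives a homomorphism $\pi_1(E)\to L$ into an abelian group, which factors through a homomorphism $\psi\colon H_1(E)\to L$ carrying the class $g_n$ of $c_n$ to $a_n$.

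First I would reduce to a purely algebraic, Specker-type statement. Precomposing $\psi$ with the canonical homomorphism $j\colon\prod_{n\in\omega}\mathbb Z\to H_1(E)$ determined by $e_n\mapsto g_n$ (this is where the well-understood infinitary abelianization of $E$ is used, following Eda--Kawamura) produces a genuine homomorphism $\Theta:=\psi\circ j\colon\prod_{n\in\omega}\mathbb Z\to L$ with $\Theta(e_n)=a_n\neq 0$ for all $n$. The two hypotheses on $L$ now say precisely that $L$ is slender. Indeed, having no infinitely divisible elements forces $L$ to be torsion-free and reduced: a nonzero torsion element of order $d$ is divisible by each of the infinitely many integers coprime to $d$, and a nonzero element of a nontrivial divisible subgroup is divisible by every integer, so either one would be infinitely divisible; and $\card(L)<2^{\aleph_0}$ prevents $L$ from containing a copy of $\prod_{n\in\omega}\mathbb Z$ or of the $p$-adic integers, each of which has cardinality $2^{\aleph_0}$. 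By Nunke's characterization $L$ is therefore slender, and slenderness means exactly that every homomorphism out of $\prod_{n\in\omega}\mathbb Z$ vanishes on all but finitely many $e_n$. Hence $a_n=0$ for large $n$, contradicting the choice of the $\ell_n$; this contradiction yields $\phi(G_n)=0$ for some $n$.

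Alternatively I could keep the argument self-contained by replacing the appeal to slenderness with a direct pigeonhole on $\Theta$. Fixing a prime $p$, consider the continuum-sized family of elements $\Theta((\sigma(n)p^n)_n)$ indexed by $\sigma\in 2^{\omega}$; since $\card(L)<2^{\aleph_0}$ two of them coincide, and comparing the two index-sequences at their first point of disagreement and reducing modulo $pL$ exhibits a nonzero element of $L$ lying in $pL$. Iterating this across primes (or replacing $p^n$ by $n!$) and bookkeeping carefully pins a single nonzero element divisible by infinitely many integers, contradicting the hypothesis directly.

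The real content, and the step I expect to be the main obstacle, is the passage to $\Theta$: the naive assignment $(m_n)\mapsto\phi([\ell_0^{m_0}\ast\ell_1^{m_1}\ast\cdots])$ need not be additive, because an infinite concatenation is genuinely sensitive to reordering at the level of $\pi_1$ while $\phi$ only respects finite products. Routing through $E$ is what legitimises it: a continuous map sends the infinite concatenation in $\pi_1(E)$ to the corresponding infinite concatenation in $\pi_1(X)$, so the delicate topological infinite product is imported as a bona fide homomorphism once the structure of $H_1(E)$ is invoked. After that single reduction the remainder is pure slender abelian group theory.
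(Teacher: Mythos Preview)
The paper does not supply its own proof of this statement: it is quoted verbatim as Theorem~4.4 of Cannon--Conner \cite{CC} and used as a black box in the proof of Theorem~\ref{inffree}. So there is no in-paper argument to compare against.

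That said, your proposed proof is correct. The reduction to a homomorphism $\Theta:\prod_{n}\mathbb{Z}\to L$ via the Hawaiian earring is exactly the right move, and you are right to flag that the only delicate point is why the assignment $(m_n)\mapsto\phi([\ell_0^{m_0}*\ell_1^{m_1}*\cdots])$ is additive. Invoking Eda's computation of $H_1(E)$ (your ``Eda--Kawamura''; in this paper it is \cite{E1}) does the job: it furnishes a genuine homomorphism $j:\prod_n\mathbb{Z}\hookrightarrow H_1(E)$ sending $e_n$ to the class of $c_n$, and then $\Theta=\psi\circ j$ is honest. Your verification that $L$ is slender via Nunke's criterion (no torsion, reduced, and too small to contain $\prod_n\mathbb{Z}$ or any $\mathbb{Z}_p$) is clean and correct.

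Two remarks. First, the phrase ``determined by $e_n\mapsto g_n$'' is mildly misleading, since the $e_n$ do not generate $\prod_n\mathbb{Z}$; you mean the infinite-concatenation map, and its additivity is precisely the nontrivial input from \cite{E1}. Second, Cannon--Conner's original argument almost certainly avoids Eda's full structure theorem: using only the tail identity $\phi([\ell_k^{m_k}*\ell_{k+1}^{m_{k+1}}*\cdots])=m_k a_k+\phi([\ell_{k+1}^{m_{k+1}}*\cdots])$, which follows from $\phi$ being a homomorphism and needs no infinitary rearrangement, one can run a direct $2^{\aleph_0}$-pigeonhole argument of the sort you sketch in your second alternative to manufacture a nonzero infinitely divisible element of $L$. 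Your route trades that hands-on bookkeeping for two heavier citations (Eda and Nunke); both approaches are valid, yours is shorter to write but leans on deeper external results.
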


\begin{theorem}\label{inffree}  If $X$ is a Peano continuum with $\card(\inffree(H_1(X)))<2^{\aleph_0}$ then $\inffree(H_1(X))$ is a free abelian group of finite rank.

\end{theorem}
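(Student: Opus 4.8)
The plan is to follow the template of Theorems \ref{modtor} and \ref{reduced}, producing a homomorphism onto $\inffree(H_1(X))$ whose kernel is fed into Lemma \ref{biglemma} part (3); but the transfinite recursion $S_0 \subseteq \langle S_0\rangle \subseteq S_1 \subseteq \cdots$ defining $\inffree$ blocks the direct route, since there is no evident way to write the kernel of $\pi_1(X) \rightarrow \inffree(H_1(X))$ as a countable Boolean combination of analytic sets. Instead I would use the theorem of \cite{CC} stated above to bypass the recursion entirely, reducing everything to a single finite-generation statement. First observe that $\inffree(H_1(X))$ is torsion-free: if $a\neq 0$ has order $m$, then taking $b = a$ gives $nb = a$ for every $n\equiv 1 \pmod{m}$, so $a$ is infinitely divisible and is therefore killed in the passage to $\inffree$; hence the final quotient has no nonzero torsion. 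Consequently it suffices to prove that $\inffree(H_1(X))$ is finitely generated, as a finitely generated torsion-free abelian group is free of finite rank.

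To obtain finite generation, consider the composite $\phi:\pi_1(X,x_0)\xrightarrow{h} H_1(X)\rightarrow \inffree(H_1(X))$. Its target is abelian, has no infinitely divisible elements, and by hypothesis has cardinality $<2^{\aleph_0}$, so the theorem of \cite{CC} applies: fixing a countable local basis $U_0\supseteq U_1\supseteq\cdots$ at $x_0$ and letting $G_i = \im(\pi_1(U_i,x_0)\rightarrow\pi_1(X,x_0))$, there is an $n$ with $\phi(G_n)=0$. Writing $B\leq H_1(X)$ for the kernel of $H_1(X)\rightarrow\inffree(H_1(X))$, this says $h(G_n)\subseteq B$, so $\inffree(H_1(X))$ is a quotient of $H_1(X)/\langle h(G_n)\rangle$. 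Now set $\Gamma = \langle\langle G_n\rangle\rangle \unlhd \pi_1(X,x_0)$. The set of loops at $x_0$ whose image lies in the open set $U_n$ is open in $L_{x_0}$, so by Lemma \ref{biglemma} part (1) the subgroup $\Gamma$ is analytic; I claim it is moreover open. Granting this, Lemma \ref{biglemma} part (3) forces $\pi_1(X,x_0)/\Gamma$ to be finitely generated, whence its abelianization $H_1(X)/\langle h(G_n)\rangle$ is finitely generated, and so is its quotient $\inffree(H_1(X))$, completing the argument.

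The main obstacle is the claim that $\Gamma$ is open, i.e. that $\bigcup\Gamma$ is open in $L_{x_0}$. The starting point is easy: $\bigcup\Gamma$ contains every loop of diameter less than the distance from $x_0$ to $X\setminus U_n$, hence contains a neighborhood of the constant loop. I would then propagate openness using that a Peano continuum is uniformly locally path connected. Concretely, given $[l]\in\Gamma$, one subdivides $[0,1]$ finely and joins the values of $l$ to those of any sufficiently uniformly close loop $l'$ by short paths, exhibiting $[l'][l]^{-1}$ as a product of loops of small diameter, each lying in $\bigcup\Gamma$; this places a sup-metric ball about $l$ inside $\bigcup\Gamma$. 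Verifying this uniform-homotopy estimate, and checking that the small loops so produced genuinely have diameter small enough to fall inside the neighborhood defining $\Gamma$, is the technical heart of the proof; everything else is a formal consequence of the theorem of \cite{CC}, Lemma \ref{biglemma}, and the torsion-freeness of $\inffree$.
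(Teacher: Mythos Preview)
Your overall strategy coincides with the paper's: invoke the Cannon--Conner theorem to kill small loops, deduce that the relevant kernel is open, and apply Lemma~\ref{biglemma}(3). The torsion-freeness observation is correct and cleanly argued.

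The gap is in the openness argument for $\Gamma = \langle\langle G_n\rangle\rangle$. You apply the theorem of \cite{CC} only at the single basepoint $x_0$, obtaining one neighborhood $U_n$ of $x_0$. But the subdivision argument you sketch expresses $[l'][l]^{-1}$ as a product of \emph{conjugates} of small loops based at the points $l(t_i)$ along $l$, not as a product of small-diameter loops at $x_0$. For such a conjugate to lie in $\Gamma$ you would need the small loop at $l(t_i)$ to be (after any path-conjugation to $x_0$) in the normal closure of $\pi_1(U_n,x_0)$; there is no reason for this when $l(t_i)$ is far from $U_n$. Concretely, let $X$ be the Hawaiian earring with an arc attached and take $x_0$ to be the far endpoint of the arc: small $U_n$ are contractible, so $G_n$ and hence $\Gamma$ are trivial, yet $\{1\}$ is not open in $\pi_1(X,x_0)$ since any loop passing through the wild point has non-open homotopy class. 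Thus your propagation step, as written, fails.

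The paper repairs this by applying the theorem of \cite{CC} at \emph{every} point $x\in X$ (the target $\inffree(H_1(X))$ is basepoint-independent, so this is legitimate), obtaining for each $x$ a neighborhood $O_x$ whose loops die under $\phi$. One then works with $\ker\phi$ directly rather than with $\Gamma$: the kernel contains conjugates of small loops at every point of $X$, and now the subdivision argument (which the paper packages as a citation to Lemma~2.8 in \cite{Co}) goes through to show $\ker\phi$ is open. Your proof becomes correct with this single change: drop $\Gamma$, apply the Cannon--Conner theorem pointwise, and run the openness argument for $\ker\phi$ instead.
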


\begin{proof}  The homomorphism $\pi_1(X) \rightarrow \inffree(H_1(X))$ is, by the previously stated theorem, such that given any point $x\in X$ there is a neighborhood $O_x$ such that any loop in $O_x$ maps trivially under the map.  This implies that the kernel is open (see Lemma 2.8 in \cite{Co}), and so $\inffree(H_1(X))$ is a finitely generated abelian group with no torsion by Lemma \ref{biglemma} part (3).
\end{proof}

We conclude this section with a dichotomy theorem on the number of homomorphisms to minimal countable fields.

\begin{theorem}  If $X$ is a Peano continuum then $\Hom(\pi_1(X), \mathbb{Z}/p)$ is either finite or of cardinality $2^{2^{\aleph_0}}$ and $\Hom(\pi_1(X), \mathbb{Q})$ is either countable or of cardinality $2^{2^{\aleph_0}}$.
\end{theorem}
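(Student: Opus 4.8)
The plan is to reduce both assertions to statements about homomorphisms out of $H_1(X)$, using that every homomorphism from $\pi_1(X)$ to an abelian group factors uniquely through the abelianization; thus $\Hom(\pi_1(X),\mathbb{Z}/p) = \Hom(H_1(X),\mathbb{Z}/p)$ and $\Hom(\pi_1(X),\mathbb{Q}) = \Hom(H_1(X),\mathbb{Q})$. For the rational case I would first note that every homomorphism to $\mathbb{Q}$ kills torsion, so $\Hom(H_1(X),\mathbb{Q}) = \Hom(\torfree(H_1(X)),\mathbb{Q})$, and that each such map extends uniquely to a $\mathbb{Q}$-linear functional on $\torfree(H_1(X))\otimes\mathbb{Q}$. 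Hence the cardinality of $\Hom(H_1(X),\mathbb{Q})$ equals $\card(\prod_{\kappa}\mathbb{Q})$, where $\kappa = \rank(H_1(X))$ is the $\mathbb{Q}$-dimension of that tensor product.

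Now I would invoke Theorem \ref{modtor}. Either $\torfree(H_1(X))$ is free abelian of finite rank, so $\kappa$ is finite and $\Hom(H_1(X),\mathbb{Q})\cong\mathbb{Q}^{\kappa}$ is countable; or $\card(\torfree(H_1(X))) = 2^{\aleph_0}$. In the latter case a short cardinal computation pins down the rank: a torsion-free group of rank $\kappa$ embeds in $\bigoplus_{\kappa}\mathbb{Q}$, so its cardinality is at most $\max(\kappa,\aleph_0)$, while a maximal independent set of size $\kappa$ gives $\kappa\leq\card$; thus $\card = 2^{\aleph_0}$ forces $\kappa = 2^{\aleph_0}$. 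Then $\Hom(H_1(X),\mathbb{Q})\cong\prod_{2^{\aleph_0}}\mathbb{Q}$ has cardinality $\aleph_0^{2^{\aleph_0}} = 2^{2^{\aleph_0}}$. The role of Theorem \ref{modtor} here is precisely to exclude the intermediate possibility $\kappa = \aleph_0$, which would otherwise produce the forbidden value $2^{\aleph_0}$.

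For the modular case set $V = H_1(X)/pH_1(X)$, a vector space over the field $\mathbb{Z}/p$. Since every homomorphism to $\mathbb{Z}/p$ annihilates $pH_1(X)$, we have $\Hom(H_1(X),\mathbb{Z}/p) = \Hom(V,\mathbb{Z}/p) = V^{*}$, whose cardinality is $p^{\dim V}$ for the $\mathbb{Z}/p$-dimension of $V$. I would realize $V$ as a quotient of $\pi_1(X)$: let $\psi:\pi_1(X)\to V$ be the composite of the Hurewicz map with the quotient, so $V\cong\pi_1(X)/\ker(\psi)$. The decisive step is to show $\ker(\psi)$ is an analytic (normal) subgroup. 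Here I would use that $[l]\in\ker(\psi)$ iff $h([l])\in pH_1(X)$, which by surjectivity of $h$ is equivalent to $(\exists l'\in L_x)[\,l*(l')^{-p}\in\bigcup[\pi_1(X),\pi_1(X)]\,]$; since the commutator subgroup is analytic, the map $(l,l')\mapsto l*(l')^{-p}$ is continuous, and analytic sets are closed under continuous preimages and under projection, the set $\bigcup\ker(\psi)$ is analytic.

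Finally I would apply Lemma \ref{biglemma} part (3): either $V\cong\pi_1(X)/\ker(\psi)$ is finitely generated, hence finite as a finitely generated $\mathbb{Z}/p$-vector space, giving $V^{*}$ finite; or $\card(V) = 2^{\aleph_0}$, in which case $\dim V = 2^{\aleph_0}$ (a $\mathbb{Z}/p$-vector space of infinite dimension $\lambda$ has cardinality $\lambda$) and $\card(V^{*}) = p^{2^{\aleph_0}} = 2^{2^{\aleph_0}}$. I expect the one genuinely non-formal ingredient to be the verification that $\ker(\psi)$ is analytic; everything else is bookkeeping of duality together with the cardinal arithmetic $p^{2^{\aleph_0}} = \aleph_0^{2^{\aleph_0}} = 2^{2^{\aleph_0}}$. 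This analyticity argument is the same in spirit as the kernel computation in Theorem \ref{modtor}, with the single power $l^{n}$ replaced by the two-variable expression $l*(l')^{-p}$.
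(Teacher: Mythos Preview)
Your proof is correct and follows essentially the same strategy as the paper: factor through the appropriate vector-space quotient (over $\mathbb{Z}/p$ or over $\mathbb{Q}$), show the kernel in $\pi_1(X)$ is analytic, invoke Lemma~\ref{biglemma}(3) to obtain the finite/$2^{\aleph_0}$ dichotomy on the dimension, and then count linear functionals. The only cosmetic differences are that the paper verifies analyticity of $\ker(\psi)$ by exhibiting it as a subgroup generated by analytic sets (via Lemma~\ref{biglemma}(1)) rather than by your projection argument, and in the $\mathbb{Q}$ case it extends homomorphisms from a rank-$2^{\aleph_0}$ free subgroup using injectivity of $\mathbb{Q}$ instead of passing to $\torfree(H_1(X))\otimes\mathbb{Q}$.
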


\begin{proof}  Notice that the set $\{l^p: l\in L_x\}$ is an analytic set in $L_x$ as the continuous image of $L_x$.  The subgroup $\langle [\{l^p: l\in L_x\}]\cup [\pi_1(X), \pi_1(X)]\rangle$ is analytic, being generated by two analytic sets.  Any homomorphism $\pi_1(X) \rightarrow \mathbb{Z}/p$ must factor through a homomorphism $\pi_1(X)/\langle [\{l^p: l\in L_x\}]\cup [\pi_1(X), \pi_1(X)]\rangle \rightarrow \mathbb{Z}/p$.  Thus the homomorphisms $\pi_1(X) \rightarrow \mathbb{Z}/p$  are in correspondence with the homomorphisms $\pi_1(X)/\langle [\{l^p: l\in L_x\}]\cup [\pi_1(X), \pi_1(X)]\rangle \rightarrow \mathbb{Z}/p$.  The group $\pi_1(X)/\langle [\{l^p: l\in L_x\}]\cup [\pi_1(X), \pi_1(X)]\rangle$ is a vector space over $\mathbb{Z}/p$, so we may pick a basis and write $\pi_1(X)/\langle [\{l^p: l\in L_x\}]\cup [\pi_1(X), \pi_1(X)]\rangle \simeq \bigoplus_{T} \mathbb{Z}/p$.  By Lemma \ref{biglemma} part (3) we know $T$ is either finite or of cardinality $2^{\aleph_0}$.  In case $T$ is finite, there are finitely many homomorphisms from $\bigoplus_{T} \mathbb{Z}/p$ to $\mathbb{Z}/p$, and in case $T$ is of cardinality $2^{\aleph_0}$ there are $2^{2^{\aleph_0}}$ many.

For the claim regarding $\mathbb{Q}$ we notice that any homomorphism $\pi_1(X) \rightarrow \mathbb{Q}$ must factor through $\torfree(H_1(X))$.  We have seen that $\torfree(H_1(X))$ is either a finite rank free group or of cardinality $2^{\aleph_0}$ (Theorem \ref{modtor}).  In case $\torfree(H_1(X))$ is a finite rank free group we have countable $\Hom(\pi_1(X), \mathbb{Q})$.  Suppose $\torfree(H_1(X))$ has cardinality $2^{\aleph_0}$.  Recall that the rank of an abelian group $A$ is the rank of the largest free abelian group in $A$, and satisfies the inequality $\card(A) \leq \rank(A) \aleph_0$.  Thus $\torfree(H_1(X))$ has a free abelian subgroup $F$ of rank $2^{\aleph_0}$.  There are $2^{2^{\aleph_0}}$ many homomorphisms from $F$ to $\mathbb{Q}$, and since $\mathbb{Q}$ is an injective $\mathbb{Z}$-module, each of these homomorphisms extends to all of $\torfree(H_1(X))$.  This establishes $2^{2^{\aleph_0}}$ many distinct homomorphisms from $\torfree(H_1(X))$ to $\mathbb{Q}$, and there cannot be more than $2^{2^{\aleph_0}}$, so we are done.
\end{proof}

\end{section}

\begin{section}{A strong abelianization}\label{Astrongabelianization}

We introduce a notion of strong abelianization:

\begin{definition}  If $X$ is a path connected metrizable space, define $\h(X)$ to be the quotient $\pi_1(X)/\overline{[\pi_1(X), \pi_1(X)]}$ where $\overline{G}$ is defined to be the smallest closed subgroup containing $G \leq \pi_1(X)$.
\end{definition}

The strong abelianization $\h(X)$ corresponds more with our intuition of abelianization for the fundamental group of a space and we shall see that it is easier to understand.  We establish some functorial properties.

\begin{theorem}\label{products}  Suppose that $X_n$ is a path connected metrizable space for each $n\in\omega$.  Then $\h(\prod_n X_n) \simeq \prod_{n}\h(X_n)$.
\end{theorem}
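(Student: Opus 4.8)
The plan is to pass through the standard identifications $\pi_1(\prod_n X_n)\simeq\prod_n\pi_1(X_n)$ and, at the level of loop spaces, $L_x(\prod_n X_n)\simeq\prod_n L_{x_n}(X_n)$, where $x=(x_n)$ and the right-hand product carries the product topology. Fixing on each $X_n$ a compatible metric bounded by $1$ and metrizing $\prod_n X_n$ by $d((y_n),(z_n))=\sum_n 2^{-n}d_n(y_n,z_n)$, one checks that uniform convergence of loops in the product space is equivalent to coordinatewise uniform convergence, so this last identification is a homeomorphism carrying concatenation, inversion and homotopy to their coordinatewise counterparts; hence the group isomorphism $\pi_1(\prod_n X_n)\simeq\prod_n\pi_1(X_n)$ is compatible with the loop-space homeomorphism. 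Writing $Y=\prod_n X_n$, since $\h(Y)=\pi_1(Y)/\overline{[\pi_1(Y),\pi_1(Y)]}$ and $\prod_n\h(X_n)\simeq\prod_n\pi_1(X_n)/\prod_n\overline{[\pi_1(X_n),\pi_1(X_n)]}$ via the natural map $\prod_n G_n/\prod_n H_n\simeq\prod_n(G_n/H_n)$, everything reduces to the equality of subgroups $\overline{[\pi_1(Y),\pi_1(Y)]}=\prod_n\overline{[\pi_1(X_n),\pi_1(X_n)]}$ inside $\prod_n\pi_1(X_n)$.

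For the inclusion $\subseteq$ I would verify that $\prod_n\overline{[\pi_1(X_n),\pi_1(X_n)]}$ is a closed subgroup of $\pi_1(Y)$ containing the commutator subgroup, whence it contains the smallest such subgroup $\overline{[\pi_1(Y),\pi_1(Y)]}$. It is a subgroup as a product of subgroups, and it contains $[\pi_1(Y),\pi_1(Y)]$ because a commutator $[(u_n),(v_n)]=([u_n,v_n])$ has each coordinate in $[\pi_1(X_n),\pi_1(X_n)]$, and likewise for finite products of commutators. For closedness I note that under the loop-space homeomorphism the set of loops of $\prod_n\overline{[\pi_1(X_n),\pi_1(X_n)]}$ is exactly $\prod_n\big(\bigcup\overline{[\pi_1(X_n),\pi_1(X_n)]}\big)$; each factor is closed in $L_{x_n}(X_n)$ by definition of $\overline{[\pi_1(X_n),\pi_1(X_n)]}$, and a product of closed sets is closed in the product topology.

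The inclusion $\supseteq$ is the main obstacle, precisely because $[\pi_1(Y),\pi_1(Y)]$ consists only of those sequences of commutator products whose commutator lengths are \emph{uniformly} bounded, so one cannot generate the full product algebraically and the closure must do the work. Writing $H=\overline{[\pi_1(Y),\pi_1(Y)]}$, I would argue by density. The finitely supported elements $D=\{(\ell_n)\in\prod_n(\bigcup\overline{[\pi_1(X_n),\pi_1(X_n)]}) : \ell_n\text{ is constant for all but finitely many }n\}$ are dense in $\prod_n(\bigcup\overline{[\pi_1(X_n),\pi_1(X_n)]})$, since a basic neighborhood constrains only finitely many coordinates and one may set the remaining coordinates equal to the constant loop. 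As $H$ is closed it therefore suffices to show $D\subseteq\bigcup H$, and since $H$ is a subgroup (and a finitely supported element is a product of singly supported ones) it is enough to show $\iota_{n_0}(a)\in H$ whenever $a\in\overline{[\pi_1(X_{n_0}),\pi_1(X_{n_0})]}$, where $\iota_{n_0}\colon\pi_1(X_{n_0})\to\pi_1(Y)$ is induced by the slice inclusion $j_{n_0}\colon X_{n_0}\to Y$.

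The key device for this last step is that $\{g\in\pi_1(X_{n_0}) : \iota_{n_0}(g)\in H\}$ is itself a closed subgroup of $\pi_1(X_{n_0})$: it is the preimage of the subgroup $H$ under the homomorphism $\iota_{n_0}$, and its set of loops is the preimage of the closed set $\bigcup H$ under the continuous loop map $\ell\mapsto j_{n_0}\circ\ell$, continuity holding because $j_{n_0}$ is uniformly continuous for the chosen metrics. It also contains $[\pi_1(X_{n_0}),\pi_1(X_{n_0})]$, since a homomorphism carries commutators into $[\pi_1(Y),\pi_1(Y)]\subseteq H$. Being a closed subgroup containing the commutator subgroup, it must contain $\overline{[\pi_1(X_{n_0}),\pi_1(X_{n_0})]}$, giving $\iota_{n_0}(a)\in H$ and completing the inclusion $\supseteq$. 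Combining the two inclusions yields the equality of subgroups and hence $\h(\prod_n X_n)\simeq\prod_n\h(X_n)$. I expect the bookkeeping for the loop-space homeomorphism and the uniform continuity of the slice maps to be the only genuinely technical points, the conceptual crux being the density reduction that circumvents the failure of algebraic generation.
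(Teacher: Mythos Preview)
Your proposal is correct and follows essentially the same route as the paper: identify $L_x(\prod_n X_n)\simeq\prod_n L_{x_n}(X_n)$, reduce to the subgroup equality $\overline{[\pi_1(Y),\pi_1(Y)]}=\prod_n\overline{[\pi_1(X_n),\pi_1(X_n)]}$, get $\subseteq$ from closedness of the right-hand side, and get $\supseteq$ by a density argument through finitely supported elements. The only notable difference is in packaging the reverse inclusion: the paper passes through the chain $\overline{\bigoplus_n[\pi_1(X_n),\pi_1(X_n)]}=\overline{\prod_n[\pi_1(X_n),\pi_1(X_n)]}=\prod_n\overline{[\pi_1(X_n),\pi_1(X_n)]}$, while you go directly to the target product and handle the singly supported case via the clean observation that $\iota_{n_0}^{-1}(H)$ is a closed subgroup of $\pi_1(X_{n_0})$ containing the commutator subgroup. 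Your version makes the final step more explicit (the paper declares the analogous equality ``similarly clear''), but the underlying mechanism---continuity of projections/inclusions plus density of finite-support sequences in a countable product---is the same.
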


\begin{proof}  For each $n\in \omega$ fix a point $x_n\in X_n$ and assume without loss of generality that $\diam(X_n)\leq 2^{-n}$. Let $x = \{x_n\}_{n\in \omega}\in \prod_{\omega}X_n = X$.  The loop space $L_x$ is homeomorphic to the space $\prod_{n\in \omega} L_{x_n}$ (see Lemma 3.8 in \cite{Co}) and so can be metrized with the metric inherited by the product metric.  Let $p_n: L_x \rightarrow L_{x_n}$ denote projection to the $n$-th coordinate.

Supposing $l, l'\in L_x$ we have that the loop $l*l'*l^{-1}*(l')^{-1}$ projects under $p_n$ to $p_n(l)*p_n(l')*p_n(l)^{-1}*p_n(l')^{-1}$.  Thus by taking products we see that the commutator subgroup $[\pi_1(X), \pi_1(X)]$ is naturally a subgroup of $\prod_n [\pi_1(X_n), \pi_1(X_n)]$.  For each $n\in \omega$ the map $\iota_n:  L_{x_n} \rightarrow L_x$ which takes a loop $l\in L_{x_n}$ to the loop $l'\in L_x$ such that $p_n(l'(s)) = l(s)$ and $p_m(l'(s)) = x_m$ for $m\neq n$ demonstrates the inclusion $\bigoplus_n [\pi_1(X_n), \pi_1(X_n)] \leq [\pi_1(X), \pi_1(X)]$.

For each $n$ we have that the continuous preimage $p_{n*}^{-1}(\overline{[\pi_1(X_n), \pi_1(X_n)]}) = \prod_{m<n} \pi_1(X_m) \times \overline{[\pi_1(X_n), \pi_1(X_n)]} \times \prod_{m>n}\pi_1(X_m)$ is closed and contains the subgroup $\prod_n [\pi_1(X_n), \pi_1(X_n)]$.  Hence the intersection over $n\in \omega$ of all such groups,  $\prod_n \overline{[\pi_1(X_n), \pi_1(X_n)]}$, contains the subgroup $\overline{[\pi_1(X), \pi_1(X)]}$.  On the other hand we have the inclusion $\overline{\bigoplus_n [\pi_1(X_n), \pi_1(X_n)]} \leq \overline{[\pi_1(X), \pi_1(X)]}$.  Since the set of loops $\bigcup \bigoplus_n [\pi_1(X_n), \pi_1(X_n)]$ is dense in the set of loops $\bigcup \prod_n [\pi_1(X_n), \pi_1(X_n)]$ we see that $\overline{\bigoplus_n [\pi_1(X_n), \pi_1(X_n)]} = \overline{\prod_n [\pi_1(X_n),\pi_1(X_n)]}$.  The equality $$\overline{\prod_n [\pi_1(X_n),\pi_1(X_n)]} = \prod_n\overline{[\pi_1(X_n),\pi_1(X_n)]}$$ is similarly clear and so we see that $\prod_n \overline{[\pi_1(X_n), \pi_1(X_n)]} = \overline{[\pi_1(X), \pi_1(X)]}$ from which we get the isomorphism $\h(\prod_n X_n) \simeq \prod_{n}\h(X)$.
\end{proof}

\begin{example}  The circle $S^1$ is a semilocally simply connected metric space whose fundamental group is isomorphic to $\mathbb{Z}$, and so we can compute the strong abelianization of the infinite torus by the above theorem: $\h(T^{\infty}) \simeq \prod_{n\in \omega} \h(S^1) \simeq \prod_{n\in\omega} \mathbb{Z}$.
\end{example}

The standard abelianization of abstract groups does not behave in nearly so nice a manner.  If $G = \prod_{n\in \omega} G_n$ is a product of groups, then the abelianization of $G$ needn't be the product of abelianizations of the $G_n$, since an element of $[G, G]$ needs to have finite commutator length and elements of $\prod_{n}[G_n, G_n]$ can have infinite commutator length.

\begin{definition}  Let $X_n$ be a sequence of metrizable spaces with distinguished points $x_n$.  Using a cutoff metric if necessary we endow each space with metric $d_n$ such that $\diam(X_n) \leq 2^{-n}$.  Define the shrinking wedge of spaces $\bigvee_{n\in \omega}^s (X_n, x_n)$ to be the set which identifies the points $x_n$, with topology given by the metric $d(y,z) = \begin{cases}d_n(y,z)$ if $y,z\in X_n\\ d_n(y, x_n) + d_m(z, x_m)$ if $y\in X_n\setminus \{x_n\}, z\in X_m\setminus\{x_m\}$ with $m\neq n\end{cases}$.  It is not difficult to see that the topology does not depend on the metrics chosen and is homeomorphic under any reordering of the index set.  If we let all but finitely many of the spaces $X_n$ be a single point then we obtain the standard (finitary) wedge of spaces.
\end{definition}

\begin{theorem}\label{shrinkingwedge} If $\{(X_n, x_n)\}_{n\in \omega}$ is a collection of path connected, metrizable pointed spaces then $\h(\bigvee_{n\in \omega}^s (X_n, x_n)) \simeq \prod_{n\in\omega}\h(X_n)$.
\end{theorem}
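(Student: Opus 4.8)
The plan is to build the isomorphism from the summand retractions and to prove injectivity by a sorting-and-approximation argument rather than any van Kampen decomposition (which is unavailable when the basepoints $x_n$ are badly placed). Write $W=\bigvee_{n\in\omega}^s(X_n,x_n)$ with wedge point $x$, let $r_m\colon W\to X_m$ be the retraction collapsing every other summand to $x$, and let $\iota_m\colon X_m\to W$ be the inclusion; both are $1$-Lipschitz for the wedge metric and so induce continuous maps of loop spaces. Composing $\rho=(r_{m\sharp})_m\colon\pi_1(W)\to\prod_m\pi_1(X_m)$ with the quotients $\pi_1(X_m)\to\h(X_m)$ yields $\Psi\colon\pi_1(W)\to\prod_m\h(X_m)$. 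Since $\diam(X_m)\le 2^{-m}$, for any loop $\ell$ the infinite concatenation $P(\ell):=\iota_0(r_0\ell)*\iota_1(r_1\ell)*\cdots$ is again a loop in $W$. The theorem reduces to showing that $\Psi$ is surjective with kernel exactly $\overline{[\pi_1(W),\pi_1(W)]}$.

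Surjectivity is immediate: lifting a target to loops $\gamma_m\in L_{x_m}$ and forming $\gamma_0*\gamma_1*\cdots$ produces a loop whose $m$-th retraction is homotopic to $\gamma_m$. For the inclusion $\overline{[\pi_1(W),\pi_1(W)]}\subseteq\ker\Psi$, observe that $\bigcup\ker\Psi=\bigcap_m r_{m*}^{-1}\big(\bigcup\overline{[\pi_1(X_m),\pi_1(X_m)]}\big)$ is an intersection of continuous preimages of closed sets, hence a closed subgroup; as it visibly contains every commutator, it contains the smallest closed subgroup $\overline{[\pi_1(W),\pi_1(W)]}$.

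The substance is the reverse inclusion $\ker\Psi\subseteq\overline{[\pi_1(W),\pi_1(W)]}$, which I would deduce from two claims about an element $[\ell]\in\ker\Psi$. (A) $P(\ell)\in\overline{[\pi_1(W),\pi_1(W)]}$: since $r_m\ell\in\overline{[\pi_1(X_m),\pi_1(X_m)]}$, the same closed-preimage argument applied now to $\iota_m$ shows $\iota_{m\sharp}(\overline{[\pi_1(X_m),\pi_1(X_m)]})\subseteq\overline{[\pi_1(W),\pi_1(W)]}$, so each $\iota_m(r_m\ell)$ lies in this subgroup; the finite partial concatenations of $P(\ell)$ then lie in it and converge to $P(\ell)$, which therefore lies in its closed loop set. (B) $\ell\equiv P(\ell)\pmod{\overline{[\pi_1(W),\pi_1(W)]}}$, a statement about an arbitrary loop. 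Here I would decompose $\ell$ into its maximal sub-loops lying in single summands; since sub-loop endpoints map to $x$, continuity forces all but finitely many of them to have diameter below any prescribed $\epsilon$. Collapsing the small sub-loops to $x$ yields a loop $\ell'$ with only finitely many sub-loops, for which the elementary rearrangement $\ell'\equiv\prod_m\iota_m(r_m\ell')=P(\ell')$ holds modulo the \emph{ordinary} commutator subgroup (a finite reordering in the abelianization, grouping factors by summand). Letting $\epsilon\to 0$ gives $\ell'\to\ell$ and $P(\ell')\to P(\ell)$, and since concatenation and inversion are continuous while $\bigcup\overline{[\pi_1(W),\pi_1(W)]}$ is closed, the limit $\ell*P(\ell)^{-1}$ lies in $\overline{[\pi_1(W),\pi_1(W)]}$. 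Combining (A) and (B) gives $[\ell]\in\overline{[\pi_1(W),\pi_1(W)]}$, so $\Psi$ descends to the asserted isomorphism.

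I expect the delicate point to be Claim (B), and specifically the possibility that $\ell$ interleaves infinitely many essential excursions between summands: no finite reordering then converts $\ell$ into the sorted loop $P(\ell)$, and this is exactly the failure of ordinary abelianization on infinite products recorded after Theorem \ref{products}. The remedy is to absorb the tail of small excursions into the basepoint so that only a finite rearrangement survives, and then to pay for the passage to the limit with the \emph{closed} commutator subgroup rather than the commutator subgroup — precisely the feature that makes $\h$, in contrast to ordinary abelianization, well behaved here. The remaining verifications are routine: the Lipschitz bounds giving continuity of $r_m$, $\iota_m$ and of infinite concatenation on loop spaces, the bound $\diam(X_m)\le 2^{-m}$ controlling all the convergences, and the fact that collapsing sub-loops of diameter $<\epsilon$ perturbs each $r_m\ell$ by less than $\epsilon$.
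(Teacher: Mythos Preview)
Your argument is correct and follows the same overall strategy as the paper: construct the natural surjection $\pi_1(W)\to\prod_n\h(X_n)$, observe that its kernel is closed and contains the commutator subgroup, and then prove the reverse inclusion by sorting a loop according to summand and passing to limits. The paper routes the map through $f\colon W\to\prod_n X_n$ and invokes Theorem~\ref{products}, while you work directly with the retractions $r_m$; these are equivalent since $r_m=p_m\circ f$.

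The genuine difference is in how the sorting (your Claim~(B)) is carried out. The paper moves the maximal excursion intervals $I_{m,k}$ one at a time, each move costing a single commutator, and takes two nested limits: first collecting all $X_0$-excursions to the left, then iterating over the summand index. You instead truncate --- collapse all excursions of diameter $<\epsilon$ to the basepoint to obtain a loop $\ell'$ which is a \emph{finite} product of sub-loops, perform one finite reordering modulo the ordinary commutator subgroup, and then pass to a single limit $\epsilon\to 0$ using continuity of concatenation and closedness of $\bigcup\overline{[\pi_1(W),\pi_1(W)]}$. Your version is a little more economical (a single limit rather than iterated ones, and no dependence on Theorem~\ref{products}); the paper's version makes more visible exactly which commutators are being used at each stage. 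Both exploit the same mechanism, namely that the closure in $\h$ is precisely what absorbs the infinite rearrangements that ordinary abelianization cannot, the point you flag in your final paragraph.
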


\begin{proof}  Let $(X, x) = \bigvee_{n\in \omega}^s (X_n, x_n)$ and $f:X \rightarrow \prod_{n\in \omega} X_n$ be the obvious map.  That $f_*$ is onto follows from the fact that $d_n \leq 2^{-n}$.  As $f$ is continuous we know the subgroup $(f_*)^{-1}(\overline{[\pi_1(\prod_{n\in\omega}X_n), \pi_1(\prod_{n\in\omega}X_n)]})$ is a closed subgroup of $\pi_1(X, x)$ and must contain the commutator subgroup.  For each $m$ let $r_m:(X, x) \rightarrow (X_m, x_m)$ be the retraction which takes all subspaces $X_n$ to the point $x_m$ whenever $n \neq m$.

Suppose $l\in L_x$.  For each $m\in \omega$ let $\mathcal{I}_m$ be the set of those maximal closed intervals $[a,b] \subseteq [0, 1]$ with nonempty interior such that $l|[a, b]$ is a loop in $X_m$ with $a = \inf\{s\in [a,b]: l(s) \neq x_m\}$ and $b = \sup\{s\in [a,b]: l(s) \neq x_m\}$.  Write $\mathcal{I}_m = \{I_{m, 0}, I_{m, 1}, I_{m, 2}, \ldots \}$ so that the length of $I_{m, k}$ is at least as great as the length of $I_{m, k+1}$.  Then each $\mathcal{I}_m$ consists of disjoint closed intervals and the collection $\mathcal{I} = \bigcup_{m} \mathcal{I}_m$ consists of nonoverlapping intervals, which has a natural ordering by comparing elements in the interior of $I$ and $I'$ under the natural ordering of $[0,1]$.  The loop $l$ is equivalent over $[\pi_1(X, x), \pi_1(X, x)]$ to the loop $l'$ which has $\inf(I_{0, 0}) = 0$, and $\sup(I_{0, 0})$ equal to the length of $I_{0,0}$ and all the other relative positions of the elements of $\mathcal{I}$ unchanged.  This loop $l'$ is in turn equivalent over $[\pi_1(X, x), \pi_1(X, x)]$ to the loop $l''$ which has $I_{0,0}$ in the same position as $l'$ has and $I_{0, 1}$ immediately to the right of $I_{0,0}$ (now $I_{0, 0}$ and $I_{0, 1}$ are no longer disjoint).  Continuing in this manner we get loops $l'''$, etc.,  which are equivalent to $l$ over $[\pi_1(X, x), \pi_1(X, x)]$ with more and more of the elements of $\mathcal{I}_0$ adjacent to each other.  Taking the limit of these loops we see that $l$ is equivalent to a loop $l_0$ over $\overline{[\pi_1(X, x), \pi_1(X, x)]}$ such that $l_0|[0, s_0]$ is a loop lying in $X_0$ with $l_0(s) \notin X_0\setminus\{x_0\}$  for all $s\in [s_0, 1]$.  Now we perform the same process to the loop $l_0|[s_0, 1]$ to obtain a loop $l_1$ which is equivalent over $\overline{[\pi_1(X, x), \pi_1(X, x)]}$ to $l$ such that $l_0|[0,s_0] = l_1|[0, s_0]$ and $l_1|[s_0, s_1]$ is a loop in $X_1$ and $l_1(s) \notin X_1\setminus\{x_1\}$ for all $s>s_1$.  Continue in this process to get loops $l_2, l_3,\cdots$ such that the analogous relations hold.  Since the lengths of the elements of $\mathcal{I}$ must add to a number at most $1$, we may take a limit again and see that $l$ is equivalent over $\overline{[\pi_1(X, x), \pi_1(X, x)]}$ to a loop $\tilde{l}$ such that $\tilde{l}|[0, s_0]$ is a loop in $X_0$ and $\tilde{l}(s) \notin X_0\setminus \{x_0\}$ for all $s>s_0$, $\tilde{l}|[s_0, s_1]$ is a loop in $X_1$ such that $\tilde{l}(s) \notin X_1\setminus\{x_1\}$ for $s\notin[s_0, s_1]$, and in general $\tilde{l}|[s_{n-1}, s_{n}]$ is a loop in $X_n$ such that $\tilde{l}(s) \notin X_n\setminus\{x_n\}$ for $s\notin [s_{n-1}, s_n]$.

Now consider a loop $l\in \bigcup f_*^{-1}(\overline{[\pi_1(\prod_{n\in\omega}X_n), \pi_1(\prod_{n\in\omega}X_n)]})$ where without loss of generality $l$ is of the form $\tilde{l}$ as in the preceeding paragraph.  We have $p_0\circ f\circ l|[0, s_0] = r_0\circ l|[0,s_0]$ is a loop in $\overline{[\pi_1(X_0), \pi_1(X_0)]}$ by the proof of Theorem \ref{products}.  Then $l|[0,s_0]$ is a loop in $\overline{[\pi_1(X_0), \pi_1(X_0)]}$ by considering the inclusion map $(X_0, x_0)\rightarrow (X, x)$.  Then $l$ is equivalent over $[\pi_1(X, x), \pi_1(X, x)]$ to a loop $l_0$ such that $l|[s_0, 1] = l_0|[s_0, 1]$ and $l(s) = x$ for all $s\in [0,s_0]$.  Continuing in this fashion and taking a limit we see that $l$ is equivalent over $\overline{[\pi_1(X, x), \pi_1(X, x)]}$ to the constant loop at $x$.  This gives us the reverse containment.
\end{proof}

\begin{example}\label{E}  Recall the Hawaiian Earring $E=\bigcup_{n\in \omega} C((0, \frac{1}{n+2}),\frac{1}{n+2})\subseteq \mathbb{R}^2$, with $C(p,r)$ the circle centered at point $p$ of radius $r$.  Clearly $E$ is a shrinking wedge of countably many circles.  Thus $\h(E) \simeq \prod_{n\in \omega} \mathbb{Z}$.  This computation concides with the standard short proof that $H_1(E)$ is uncountable which is given by mapping onto the fundamental group of the infinite torus.  The first homology was computed by K. Eda in \cite{E1} to be isomorphic to $\prod_{n\in \omega}\mathbb{Z} \oplus  \bigoplus_{2^{\aleph_0}} \mathbb{Q} \oplus \bigoplus_p A_p$ where $A_p$ is the $p$-dic completion of the free abelian group of rank $2^{\aleph_0}$.
\end{example}

We note that first homology does not behave this nicely even under a wedge of two spaces, by observing the following theorem of Eda (see \cite{E2}):

\begin{theorem*}  Letting $(X, x)$ and $(Y, y)$ be arbitrary pointed spaces we have that $H_1((X, x)\vee(Y, y)) \simeq H_1(X) \oplus H_1(Y) \oplus H_1(C(X, x)\vee C(Y, y))$ where $C(W, w)$ denotes the cone of the space $W$ with distinguished point $(w, 0)$ in the cone.
\end{theorem*}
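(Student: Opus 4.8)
The plan is to realize the correction term $H_1(C(X,x)\vee C(Y,y))$ as the first homology of the cofiber of the inclusion $X\vee Y\hookrightarrow C(X,x)\vee C(Y,y)$, and then to pin down the resulting long exact sequence using the obvious retraction splitting. Throughout I write $\widetilde{H}_*$ for reduced singular homology and abbreviate $CX=C(X,x)$, $CY=C(Y,y)$; I also assume $X$ and $Y$ path connected, since the general case reduces to this (the wedge only merges the two basepoint path components, and the remaining components contribute identically to both sides). The one place where the possible wildness of $X$ and $Y$ must be watched is the passage from relative homology to reduced homology of a quotient. This remains legitimate because the cone inclusion $Z\hookrightarrow CZ$ admits the collar $Z\times[0,\tfrac12)$ as a deformation retracting neighborhood for every space $Z$, so $(CZ,Z)$, and hence $(CX\vee CY,\,X\vee Y)$, is a good pair.

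First I would record the splitting. The retractions $r_X\colon X\vee Y\to X$ and $r_Y\colon X\vee Y\to Y$ collapsing the opposite summand satisfy $r_X\circ i_X=\mathrm{id}_X$, $r_Y\circ i_Y=\mathrm{id}_Y$, while $r_X\circ i_Y$ and $r_Y\circ i_X$ are constant, where $i_X,i_Y$ are the wedge inclusions. Thus $(r_X)_*\oplus(r_Y)_*$ is a left inverse to $(i_X)_*\oplus(i_Y)_*\colon H_1(X)\oplus H_1(Y)\to H_1(X\vee Y)$, which is therefore a split monomorphism. Writing $M$ for its image and $K$ for the kernel of $(r_X)_*\oplus(r_Y)_*$, one gets $H_1(X\vee Y)=M\oplus K$ with $M\cong H_1(X)\oplus H_1(Y)$, so it remains to identify $K$ with $H_1(CX\vee CY)$.

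Next I would write down the long exact sequence of the good pair $(CX\vee CY,\,X\vee Y)$. Since $(CX\vee CY)/(X\vee Y)=\Sigma X\vee\Sigma Y$ and the suspension isomorphism $\widetilde{H}_n(\Sigma Z)\cong\widetilde{H}_{n-1}(Z)$ holds for arbitrary $Z$ (it is the connecting map of the contractible pair $(CZ,Z)$), path connectedness gives $\widetilde{H}_1(\Sigma X\vee\Sigma Y)=\widetilde{H}_0(X)\oplus\widetilde{H}_0(Y)=0$ and $\widetilde{H}_2(\Sigma X\vee\Sigma Y)\cong H_1(X)\oplus H_1(Y)$. The relevant segment then reads
$$\widetilde{H}_2(CX\vee CY)\longrightarrow H_1(X)\oplus H_1(Y)\overset{\partial}{\longrightarrow}H_1(X\vee Y)\overset{\iota_*}{\longrightarrow}H_1(CX\vee CY)\longrightarrow 0,$$
where $\iota\colon X\vee Y\hookrightarrow CX\vee CY$ is the inclusion; in particular $\iota_*$ is onto with kernel $\operatorname{im}\partial$.

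The crux is to show $\operatorname{im}\partial=M$, and here I would use naturality of the connecting homomorphism for the map of pairs $(CX,X)\hookrightarrow(CX\vee CY,\,X\vee Y)$ together with its $Y$-analogue. Under $H_2(CX,X)\cong\widetilde{H}_2(\Sigma X)\cong H_1(X)$ the connecting map $\partial_X$ of the contractible pair $(CX,X)$ is an isomorphism, the induced map $H_2(CX,X)\to H_2(CX\vee CY,X\vee Y)$ is the inclusion of $H_1(X)$ as the first summand of $H_1(X)\oplus H_1(Y)$, and on the subspace level the map of pairs restricts to $i_X$. Commutativity of the naturality square $\partial\circ(j_X)_*=(i_X)_*\circ\partial_X$ then forces $\partial$ to carry the first summand onto $(i_X)_*H_1(X)$, and symmetrically the second onto $(i_Y)_*H_1(Y)$, so $\operatorname{im}\partial=(i_X)_*H_1(X)+(i_Y)_*H_1(Y)=M$. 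Since $i_X,i_Y$ factor through the contractible cones, $\iota_*$ annihilates $M$; combined with $H_1(X\vee Y)=M\oplus K$, $\ker\iota_*=\operatorname{im}\partial=M$, and surjectivity of $\iota_*$, this shows $\iota_*$ restricts to an isomorphism $K\xrightarrow{\ \sim\ }H_1(CX\vee CY)$, giving $H_1(X\vee Y)\cong H_1(X)\oplus H_1(Y)\oplus H_1(CX\vee CY)$. I expect the step needing the most care to be exactly this naturality identification of $\partial$, together with the (routine but essential) verification that the good-pair and suspension machinery really does apply to the wild spaces in play.
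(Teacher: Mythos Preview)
The paper does not prove this statement at all: it is quoted, with attribution to Eda and a citation to \cite{E2}, purely to illustrate that ordinary $H_1$ fails to be additive over wedges. There is therefore no in-paper argument to compare yours against.

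Your argument is nonetheless correct and rather clean. The long exact sequence of the pair $(CX\vee CY,\,X\vee Y)$ is exactly the right device, and your identification of $\operatorname{im}\partial$ with $M$ via naturality along $(CX,X)\hookrightarrow(CX\vee CY,X\vee Y)$ goes through. One step you use but do not quite justify is the splitting $\widetilde H_*(\Sigma X\vee\Sigma Y)\cong\widetilde H_*(\Sigma X)\oplus\widetilde H_*(\Sigma Y)$: since the entire point of Eda's theorem is that $\widetilde H_1(X\vee Y)$ need \emph{not} split, you should say explicitly why the suspended wedge does. The reason is that the wedge point in $\Sigma X\vee\Sigma Y$ is the common south pole, which carries a cone neighborhood regardless of how wild $X$ and $Y$ are; hence the pair is well-pointed and the usual Mayer--Vietoris (or retraction) splitting applies. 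This is really the conceptual crux---the cone construction tames the basepoint---and it deserves a sentence. Your reduction to the path-connected case is also a bit quick: for it to work you need $H_1(CX\vee CY)\cong H_1(CX_0\vee CY_0)$ where $X_0,Y_0$ are the basepoint components, and this requires the separate observation that the extra pieces $C(X\setminus X_0)$, $C(Y\setminus Y_0)$ are contractible and attached at the cone \emph{tips}, which are themselves good points in $CX_0\vee CY_0$.
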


The homology group $H_1(C(X, x)\vee C(Y, y))$ is often nontrivial, so the standard first homology is in some ways less wieldy.
\end{section}

\begin{section}{Borel subgroups}\label{Borelsubgroups}

We show that the fundamental group of the Hawaiiam earring $E$ (see Example \ref{E}) exhibits subgroups of an arbitrarily high Borel complexity, as well as a subgroup which is analytic and not Borel.  For a Polish group $G$ we say $H \leq G$ is of pointclass $\Po$ if and only if $H$ is of pointclass $\Po$ as a subset of $G$.

Select a point $x$ in the circle $S^1$ and let $\overline{x} = (x, x, \ldots)\in \prod_{\omega}S^1 = T^{\infty}$.  Let $l\in L_{x, S^1}$ be a loop such that $[l]$ generates $\pi_1(S^1, x) \simeq \mathbb{Z}$.  Let $g:L_{x, S^1} \rightarrow \mathbb{Z}$ be given by $[l] \mapsto 1$ and mapping all loops in all other equivalence classes by $[l^m] \mapsto m$ for all $m\in \mathbb{Z}$.  Viewing $\mathbb{Z}$ as a discrete topological space (which endows $\mathbb{Z}$ with a Polish group topology), the map $g$ is continuous since each homotopy class in $\pi_1(S^1, x)$ is clopen in the space $L_{x, S^1}$ (this follows from the fact that $S^1$ is locally path connected and semi-locally simply connected, see Proposition 2.10 in \cite{Co}).  Letting $j: \mathbb{Z} \rightarrow L_{x, S^1}$ be defined by $j(m) = l^{m}$, it is clear that $j$ is also continuous and that $g\circ j$ is the identity map on $\mathbb{Z}$ and $j\circ g$ selects the representative $l^m \in [l^m]$.  Defining $\overline{g}: \prod_{\omega} L_{x, S^1} \rightarrow \prod_{\omega}\mathbb{Z}$ and $\overline{j}:\prod_{\omega}\mathbb{Z} \rightarrow  \prod_{\omega} L_{x, S^1}$ coordinatewise it is clear that $\overline{g}$ and $\overline{j}$ are continuous and that $\overline{g}\circ \overline{j}$ is identity on $\prod_{\omega}\mathbb{Z}$ and $\overline{j}\circ\overline{g}$ selects the representative $(l^{m_0}, l^{m_1}, \ldots) \in [(l^{m_0}, l^{m_1}, \ldots)]$.  It is a straightforward exercise to check that the loop space $L_{\overline{x}, T^{\infty}}$ is homeomorphic to the product $\prod_{\omega} L_{x, S^1}$ in the obvious way (see Lemma 3.8 in \cite{Co}).

\begin{lemma}\label{Borellemma1}  Suppose $\Po$ is a pointclass defined for Polish spaces which is closed under continuous preimages.  Then $G \leq \prod_{\omega}\mathbb{Z}$ is $\Po$ if and only if $G$ is $\Po$ as a subgroup of $\pi_1(T^{\infty}, \overline{x})$.
\end{lemma}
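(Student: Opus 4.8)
The plan is to exploit the continuous maps constructed immediately before the statement, namely $\overline{g}:\prod_{\omega} L_{x, S^1}\to\prod_{\omega}\mathbb{Z}$ and its section $\overline{j}:\prod_{\omega}\mathbb{Z}\to\prod_{\omega}L_{x, S^1}$, together with the homeomorphism $\Phi:L_{\overline{x}, T^{\infty}}\to\prod_{\omega}L_{x, S^1}$ recorded just before the statement. Composing these, I obtain a continuous ``homotopy class'' map $q=\overline{g}\circ\Phi:L_{\overline{x}, T^{\infty}}\to\prod_{\omega}\mathbb{Z}$ and a continuous ``representative selection'' map $s=\Phi^{-1}\circ\overline{j}:\prod_{\omega}\mathbb{Z}\to L_{\overline{x}, T^{\infty}}$. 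The conceptual heart of the argument is to recognize that, under the canonical identification $\pi_1(T^{\infty}, \overline{x})\simeq\prod_{\omega}\mathbb{Z}$ (which holds because the fundamental group of a product is the product of fundamental groups and $\pi_1(S^1)\simeq\mathbb{Z}$), the map $q$ is precisely the assignment of a loop to its homotopy class.

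Granting this, both directions of the biconditional reduce to two set-theoretic identities, each settled by closure of $\Po$ under continuous preimages. First I would verify $\bigcup G=q^{-1}(G)$: a loop $l$ lies in $\bigcup G$ exactly when its class $[l]$ lies in $G$, and $q(l)=[l]$ by the identification above. Hence if $G$ is $\Po$ as a subset of $\prod_{\omega}\mathbb{Z}$, then $\bigcup G=q^{-1}(G)$ is $\Po$ in the loop space, so $G$ is $\Po$ as a subgroup of $\pi_1(T^{\infty}, \overline{x})$. Second, using the section property that $\overline{g}\circ\overline{j}$ is the identity on $\prod_{\omega}\mathbb{Z}$, I would check that for $a\in\prod_{\omega}\mathbb{Z}$ the selected representative $s(a)$ has class $q(s(a))=\overline{g}(\overline{j}(a))=a$, whence $s^{-1}(\bigcup G)=\{a:[s(a)]\in G\}=G$. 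Consequently, if $\bigcup G$ is $\Po$ in the loop space then $G=s^{-1}(\bigcup G)$ is $\Po$ in $\prod_{\omega}\mathbb{Z}$, again by closure under continuous preimages. Together these give both implications.

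The argument is almost entirely formal once the maps are assembled, so there is no serious obstacle; the one point deserving care is the identification of $q$ with the homotopy-class map, that is, confirming that applying $\overline{g}$ coordinatewise genuinely computes the element of $\pi_1(T^{\infty})\simeq\prod_{\omega}\mathbb{Z}$ represented by a loop. This rests on the homeomorphism $L_{\overline{x}, T^{\infty}}\simeq\prod_{\omega}L_{x, S^1}$ respecting homotopy classes coordinatewise, which follows from the product decomposition of the fundamental group. I note that abelianness of $\pi_1(T^{\infty})$ plays no role in the lemma itself—only the product structure is used—although it is precisely abelianness that makes $\pi_1(T^{\infty})$ coincide with $H_1(T^{\infty})$ and thereby links this lemma to the homological applications that follow.
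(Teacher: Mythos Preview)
Your proposal is correct and follows essentially the same approach as the paper's own proof: both directions are obtained by pulling back along the continuous maps $\overline{g}$ and $\overline{j}$ (composed with the homeomorphism $L_{\overline{x},T^{\infty}}\simeq\prod_{\omega}L_{x,S^1}$) and invoking closure of $\Po$ under continuous preimages. The paper's proof is terser, leaving the set identities $\bigcup G=\overline{g}^{-1}(G)$ and $G=\overline{j}^{-1}(\bigcup G)$ implicit, but the argument is identical.
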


\begin{proof}  Suppose $G$ is a $\Po$ subgroup of the Polish group $\prod_{\omega}\mathbb{Z}$.  The set of loops in $\pi_1(T^{\infty}, \overline{x})$ corresponding to $G$ is precisely $\overline{g}^{-1}(G)$, so $G$ is $\Po$ when considered a subgroup of $\pi_1(T^{\infty}, \overline{x})$.

Supposing $G$ to be $\Po$ as a subgroup of $\pi_1(T^{\infty}, \overline{x})$, the set $\overline{j}^{-1}(\bigcup G)\subseteq \prod_{\omega}\mathbb{Z}$ corresponds precisely to $G$ considered as a subset of $\prod_{\omega}\mathbb{Z}$, so $G$ is $\Po$ in the Polish group $\prod_{\omega}\mathbb{Z}$.
\end{proof}

Next we draw a correspondence between subgroups of $\pi_1(T^{\infty}, \overline{x})$ and certain of the subgroups of $\pi_1(E, (0,0))$.  Let $f:E \rightarrow T^{\infty} = \prod_{\omega} S^1$ be the standard mapping from the Hawaiian earring defined by mapping the $n$-th circle to the circle in $T^{\infty}$ at the $n$-th coordinate.  This is a continuous, onto mapping.  The map $f$ defines a continuous $\overline{f}:L_{(0,0),E}\rightarrow L_{\overline{x},T^{\infty}}$ by composition: $\overline{f}(l) = f\circ l$.  Define $\overline{k}:L_{\overline{x}, T^{\infty}} \rightarrow L_{(0,0), E}$ by having $\overline{k}(l)|[1-2^{-n}, 1-2^{-n - 1}]$ trace out the loop $p_n\circ l$ on the $n$-th circle of the Hawaiian earring and having $\overline{k}(l)(1) = (0,0)$.  That $\overline{k}$ is continuous is easy to check.  We also have the relations $\overline{f} \circ \overline{k}(l) \in [l]$.

\begin{lemma}\label{Borellemma2}  Suppose $\Po$ is a pointclass defined for Polish spaces which is closed under continuous preimages.  Then $G\leq \pi_1(T^{\infty}, \overline{x})$ is $\Po$ if and only if $f_*^{-1}(G)$ is $\Po$. 
\end{lemma}

\begin{proof}  If $G \leq \pi_1(T^{\infty}, \overline{x})$ is $\Po$ then $\bigcup f_*^{-1}(G) = \overline{f}^{-1}(\bigcup G)$ is also $\Po$.  Now suppose that $f_*^{-1}(G)$ is $\Po$.  We shall be done if we prove the equality $\overline{k}^{-1}(\bigcup f_*^{-1}(G)) = \bigcup G$.  If $l\in \bigcup G$ we already noted above that $\overline{f}\circ \overline{k}(l) \in [l] \in G$ so that $\overline{k}(l) \in \bigcup f_*^{-1}(G)$ and $l \in \overline{k}^{-1}(\bigcup f_*^{-1}(G))$.  On the other hand, if $l\in \overline{k}^{-1}(\bigcup f_*^{-1}(G))$ we know $\overline{k}(l) \in \bigcup f_*^{-1}(G)$ so that $[\overline{k}(l)]\in  f_*^{-1}(G)$ and $[f\circ \overline{k}(l)]\in G$.  But $[f\circ\overline{k}(l)] = [l]$, so $l\in \bigcup G$ and we are done.

\end{proof}

We recall the Borel pointclass hierarchy (see \cite{Ke}).  If $Z$ is a Polish space let $\Sigma_1^0(Z)$ denote the collection of open subsets of $X$, $\Pi_1^0(Z)$ the collection of closed subsets and $\Delta_1^0(Z) = \Sigma_1^0(Z) \cap \Pi_1^0$.  For each ordinal $1< \gamma<\omega_1$ define the following:

\begin{enumerate}  \item $\Sigma_{\gamma}^0(Z)$ is the collection of countable unions of sets in $\bigcup_{\epsilon<\gamma} \Pi_{\epsilon}^0(Z)$

\item $\Pi_{\gamma}^0(Z)$ is the collection of countable intersections of sets in $\bigcup_{\epsilon<\gamma} \Sigma_{\epsilon}^0(Z)$

\item $\Delta_{\gamma}^0(Z) = \Pi_{\gamma}^0(Z) \cap \Sigma_{\gamma}^0(Z)$
\end{enumerate}

These pointclasses arrange neatly into an array scheme:

\vspace{.3in}

\begin{center}
$\begin{matrix} & & \Sigma_1^0(Z) & & & & \Sigma_2^0(Z) & &  \cdots\\ \\ \\ \Delta_1^0(Z) & &  & &\Delta_2^0(Z) & &  \cdots\\ \\ \\ & & \Pi_1^0(Z) & & & & \Pi_2^0(Z) & &  \cdots

\end{matrix}
$
\end{center}

\vspace{.3in}

Each pointclass contains all pointclasses to the left, and if $Z$ is an uncountable Polish space the containments are strict.  The class of Borel subsets of $Z$ is easily checked to be  $\bigcup_{\gamma<\omega_1} \Sigma_{\gamma}^0(Z)$, which is equal to $\bigcup_{\gamma<\omega_1} \Pi_{\gamma}^0(Z)$.  These pointclasses give a natural way of organizing the Borel sets according to complexity.  We shall say a subset $W\subseteq Z$ is a \textbf{true} $\Po$ set provided it is a $\Po$ set and is not in any of the pointclasses to the left of $\Po$.  Each of these pointclasses is closed under continuous preimages.

The class $\Sigma_1^1(Z)$ is defined to be the class of all analytic subsets of $Z$, $\Pi_1^1(Z)$ is the class of complements of analytic sets and $\Delta_1^1(Z) = \Sigma_1^1(Z) \cap \Pi_1^1(Z)$.  As all Borel sets are analytic we know the class of Borel subsets of $Z$ is contained in $\Delta_1^1(Z)$, and a theorem of Suslin states that in fact $\Delta_1^1(Z)$ is precisely the class of Borel sets.  Analogously a subset $W \subseteq Z$ is \textbf{true analytic} provided $W$ is analytic and not Borel.

We have the following theorem of Farah and Solecki (Theorem 2.1 in \cite{FS}):

\begin{theorem*}  If $\Gamma$ is an uncountable Polish group then for each $\omega_1>\gamma \geq 2$ there exists a subgroup which is true $\Sigma_{\gamma}^0$ in $\Gamma$ and for each $\omega_1 > \gamma \neq 2$ there is a subgroup which is true $\Pi_{\gamma}^0$ as a subspace of $\Gamma$.
\end{theorem*}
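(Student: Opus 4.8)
The plan is to dispose of the boundary levels first and then to transfer subsets of a Cantor set of prescribed Borel complexity into $\Gamma$ using the group operation. Since $\Gamma$ is uncountable and Polish it is not discrete, so the trivial subgroup $\{1\}$ is closed but not open and is therefore a true $\Pi_1^0$ subgroup, which handles $\gamma = 1$. To see that the two excluded levels are genuinely forced, so that the stated ranges are sharp, I would observe that any open subgroup is also closed, its complement being a union of open cosets, hence clopen, which rules out a true $\Sigma_1^0$ subgroup; and that any $G_\delta$ (that is, $\Pi_2^0$) subgroup $H$ is automatically closed, since $H$ is a dense $G_\delta$, hence comeager, in the Polish group $\overline{H}$, and a comeager subgroup of a Polish group is all of it, giving $H = \overline{H}$. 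It therefore remains to construct true $\Sigma_\gamma^0$ subgroups for $2 \leq \gamma < \omega_1$ and true $\Pi_\gamma^0$ subgroups for $3 \leq \gamma < \omega_1$.

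For these levels I would first fix a topological embedding $\iota : 2^\omega \to \Gamma$ onto a Cantor set $C = \iota(2^\omega)$ that is \emph{independent}: no nontrivial reduced word in finitely many distinct elements of $C$ equals $1$ except for relations that vanish on a comeager set of tuples, such as those imposed by a common finite exponent. Such a $C$ can be extracted by a Mycielski-type argument: for each reduced word the set of tuples on which it vanishes is closed, the words whose vanishing set is nowhere dense can be avoided simultaneously by Baire category on a perfect set, and the remaining words, which vanish on sets of nonempty interior, record the laws that $\Gamma$ imposes and are absorbed into a normal form. Independence ensures that every element of the generated subgroup $\langle C\rangle$ has a well-defined finite support in $C$, and in particular that $c_t := \iota(t)$ lies in $\langle c_s : s \in A\rangle$ if and only if $t \in A$. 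Consequently, for every $A \subseteq 2^\omega$, any subgroup $H$ with $H \cap C = \{c_t : t\in A\}$ satisfies $\iota^{-1}(H) = A$.

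This last identity yields the lower bounds uniformly and essentially for free: as $\iota$ is continuous and each pointclass is closed under continuous preimages, if $H$ belonged to a given pointclass then so would $A = \iota^{-1}(H)$. Taking $A$ to be a true $\Sigma_\gamma^0$ (respectively true $\Pi_\gamma^0$) subset of $2^\omega$, which exists at every level because $2^\omega$ is uncountable Polish and the hierarchy is strict there, forces any such $H$ to lie no lower than $\gamma$. For the $\Sigma$ family I would then take $H = \langle c_t : t\in A\rangle$ directly: membership in $H$ asserts the existence of a finite word with letters in $A$, a countable-union operation, so after stratifying $\langle C\rangle$ by bounded word length and bounded total exponent one expresses $H$ as a countable union of sets each of the form (a closed stratum) $\cap$ (finitely many preimages of $A$). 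Since $\Sigma_\gamma^0$ is closed under finite intersections and countable unions, this places $H$ in $\Sigma_\gamma^0$ and matches the lower bound.

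The main obstacle is twofold, and both parts concern the upper bound. First, the stratified description above is legitimate only if the support and letter-extraction maps on $\langle C\rangle$ are continuous on each bounded stratum; securing this requires choosing $C$ and a normal form carefully enough that nearby elements of a fixed stratum have nearby letters, and it is here that algebraically rigid groups such as $\mathbb{R}$, where $\langle C\rangle$ is dense rather than closed and must be treated as a Borel subset of $\Gamma$, together with groups carrying unavoidable torsion, where independence is read modulo the common law, demand the most care. Second, and more seriously, the generation operation is inherently of $\Sigma$ type, so it cannot directly produce the $\Pi_\gamma^0$ subgroups; realizing those requires an intersection-type encoding of a $\Pi_\gamma^0$-complete set as a subgroup while preserving the trace $H \cap C$, and this is the genuinely delicate heart of the argument, the step I would expect to occupy most of the work.
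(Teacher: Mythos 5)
First, a point of orientation: the paper does not prove this statement at all. It is quoted verbatim as Theorem 2.1 of Farah and Solecki \cite{FS} and used as a black box (together with the consequence of \cite{DL}) to deduce Theorem \ref{Borel} via Lemmas \ref{Borellemma1} and \ref{Borellemma2}. So there is no internal proof to compare against; what you have attempted is a from-scratch reproof of a deep external result, and your sketch falls well short of one. The first concrete gap is the Mycielski step. Your dichotomy --- a word's vanishing set is either nowhere dense (hence avoidable on a perfect set) or has nonempty interior and therefore ``records a law of $\Gamma$'' that can be ``absorbed into a normal form'' --- is not valid. A word can vanish on a set with nonempty interior without being a law of the group (consider a group with a compact open subgroup of finite exponent, where $x^n$ vanishes on a nonempty open set but not identically), and ``absorbed into a normal form'' has no precise mathematical content: everything you need later (well-defined finite supports in $\langle C\rangle$, the identity $\iota^{-1}(\langle c_t : t\in A\rangle)=A$, injectivity of the word maps on strata so that letter extraction is continuous on compact pieces) depends on exactly how the laws interact with the chosen Cantor set, and this is precisely where the difficulty of the theorem for \emph{arbitrary} uncountable Polish groups lives. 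Your $\Sigma^0_\gamma$ upper bound and your lower-bound trick (pulling back along $\iota$ and using closure of pointclasses under continuous preimages, which is also the mechanism of the paper's Lemmas \ref{Borellemma1} and \ref{Borellemma2}) are fine \emph{conditionally on} this unproven independence setup, but they do not stand without it.

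The second gap you name yourself, and it is disqualifying: the entire $\Pi^0_\gamma$ half of the theorem ($3\leq\gamma<\omega_1$) is left as ``the genuinely delicate heart of the argument.'' As you correctly observe, generation by a set is an inherently existential ($\Sigma$-type) operation, so $\langle c_t : t\in A\rangle$ with $A$ true $\Pi^0_\gamma$ gives the right lower bound but lands in roughly $\Sigma^0_{\gamma+1}$, not $\Pi^0_\gamma$; a completely different, intersection-type construction of subgroups is required, and in Farah and Solecki's paper this occupies a substantial amount of work (their constructions pass through subgroups associated to ideals and Polishability considerations, not through generated subgroups at all). A proof proposal that establishes, modulo an unjustified independence lemma, only the $\Sigma$ side of a two-sided theorem, and defers the $\Pi$ side as future work, is a plan rather than a proof. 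Your remarks about the excluded levels (open subgroups are clopen; $G_\delta$ subgroups are closed by the Baire category/Pettis argument) are correct and give good context for why the ranges in the statement are what they are, but they are not what the statement asks you to produce.
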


As a consequence of Theorem 1.2 in \cite{DL} we also get the following:

\begin{theorem*}  If $\Gamma$ is an uncountable abelian Polish group then $\Gamma$ has a subgroup which is true analytic.
\end{theorem*}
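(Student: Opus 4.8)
The plan is to derive this from [DL, Theorem 1.2], whose role I take to be the supply of an analytic-but-not-Borel subgroup inside a sufficiently rich ``model'' abelian Polish group, and then to pass to an arbitrary uncountable abelian $\Gamma$ by the same continuous-transfer mechanism already used in Lemmas \ref{Borellemma1} and \ref{Borellemma2}, exploiting that the analytic pointclass is closed under continuous images and preimages. Note that iterating the Farah--Solecki theorem cannot suffice here: it produces true $\Sigma^0_\gamma$ and $\Pi^0_\gamma$ subgroups for every countable $\gamma$, but each such subgroup is Borel, and amalgamating countably many of them (say as $\bigcap_n \pi_n^{-1}(H_n)$ in a product decomposition of $\Gamma$) only yields a Borel set of some countable rank $\sup_n\gamma_n+1<\omega_1$. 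Escaping the Borel hierarchy altogether genuinely requires the descriptive-set-theoretic input of [DL, Theorem 1.2].

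First I would fix, via the structure theory of abelian Polish groups, a model group $M$ — concretely $\prod_{\omega}\mathbb{Z}$, which is the only case actually needed for Theorem \ref{Borel} through the identification $\pi_1(T^\infty)\cong\prod_\omega\mathbb{Z}$ — together with a continuous homomorphism relating $M$ to $\Gamma$. In the model group the analytic non-Borel subgroup $A$ would be produced by a reduction argument: exhibit a continuous map $r$ from the space of trees on $\omega$ into $M$ carrying the $\Sigma^1_1$-complete set of ill-founded trees onto membership in $A$, so that $A$ is simultaneously analytic (being generated by an analytic set, as in Lemma \ref{biglemma}(1)) and $\Sigma^1_1$-hard, hence not Borel. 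This existence statement in $M$ is what I expect [DL, Theorem 1.2] to furnish.

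Next I would transfer $A$ to $\Gamma$. If $M$ relates to $\Gamma$ through a continuous homomorphism $\theta\colon M\to\Gamma$ admitting a continuous section $\sigma$ on the relevant set — exactly the pattern of $\overline{g},\overline{j}$ and $\overline{f},\overline{k}$ in the lemmas above — then the subgroup $\theta_*^{-1}(B)$ corresponding to $A$ has witness set equal to a continuous preimage of $\bigcup A$; it is therefore analytic, and the section $\sigma$ shows that its being Borel would force $A$ Borel, a contradiction. The upshot is an analytic, non-Borel, hence true analytic subgroup of $\Gamma$.

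The main obstacle is the structural step providing $M$ and the transfer map for a genuinely arbitrary uncountable abelian $\Gamma$. For non-locally-compact groups such as $\prod_\omega\mathbb{Z}$ the model and the section are transparent, but for locally compact or connected groups such as $\mathbb{R}$ or $\mathbb{T}$ there is no closed copy of $\prod_\omega\mathbb{Z}$ (the closed subgroups of $\mathbb{R}$ are trivial, cyclic, or all of $\mathbb{R}$), so one cannot simply pull back; instead the analytic non-Borel subgroup must be manufactured intrinsically, arranging a continuous reduction of the ill-founded trees directly into $\Gamma$ while keeping the target closed under the group operations. Handling this uniformly across every uncountable abelian Polish group is precisely the delicate content I would defer to [DL, Theorem 1.2]; the work left on my side is checking that its hypotheses cover each such $\Gamma$ and that non-Borelness survives the transfer.
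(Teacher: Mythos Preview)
The paper does not prove this theorem at all: it is simply \emph{quoted} as a consequence of \cite{DL}, Theorem 1.2, and then invoked in the proof of Theorem \ref{Borel}. There is no argument to compare against, and none is expected of you beyond citing \cite{DL}.

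Your proposal, by contrast, tries to manufacture a proof via the transfer mechanism of Lemmas \ref{Borellemma1} and \ref{Borellemma2}. This is a misreading of the logical flow. Those lemmas do not move subgroups between abelian Polish groups in general; they move them specifically from $\prod_\omega\mathbb{Z}$ (equivalently $\pi_1(T^\infty)$) into $\pi_1(E)$, and they are used \emph{after} the quoted theorem, in the proof of Theorem \ref{Borel}. They play no role in establishing the statement about arbitrary uncountable abelian $\Gamma$, and indeed they cannot: as you yourself observe, there is no continuous section of $\prod_\omega\mathbb{Z}$ into groups like $\mathbb{R}$ or $\mathbb{T}$, so the transfer scheme breaks down exactly where the general theorem has content.

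You recognize this obstacle and then propose to ``defer to \cite{DL}, Theorem 1.2'' for the intrinsic construction in each $\Gamma$. But that is the entire theorem. Once you grant that \cite{DL} produces a true analytic subgroup in every uncountable abelian Polish group, there is nothing left to transfer and your model-group apparatus is idle. The honest content of your proposal reduces to the single sentence the paper already has: this follows from \cite{DL}, Theorem 1.2.
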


These results lend a robustness to the theory of Borel and analytic subgroups of Polish groups.  As a consequence, a comparable robustness holds for subgroups of the Hawaiian earring fundamental group, which is summarized in the following:

\begin{B}  The fundamental group $\pi_1(E)$ has normal subgroups of the following types:

\begin{enumerate}\item true $\Sigma_{\gamma}^0$ for $\omega_1>\gamma \geq 2$

\item true $\Pi_{\gamma}^0$  for each $\omega_1 > \gamma \neq 2$

\item true analytic
\end{enumerate}
\end{B}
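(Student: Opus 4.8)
The plan is to reduce the statement entirely to the two correspondence lemmas already established, combined with the quoted theorems of Farah--Solecki and of Dougherty--Lupini, by exploiting that $\prod_{\omega}\mathbb{Z}$ is an uncountable abelian Polish group.

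First I would produce subgroups of the required complexity at the level of $\prod_{\omega}\mathbb{Z}$. Since $\prod_{\omega}\mathbb{Z}$ is Polish and of cardinality $2^{\aleph_0}$, the theorem of Farah--Solecki furnishes, for each $\omega_1>\gamma\geq 2$, a subgroup of $\prod_{\omega}\mathbb{Z}$ that is true $\Sigma_{\gamma}^0$, and for each $\omega_1>\gamma\neq 2$ a subgroup that is true $\Pi_{\gamma}^0$; since $\prod_{\omega}\mathbb{Z}$ is moreover abelian, the theorem of Dougherty--Lupini furnishes a true analytic subgroup. Call any such subgroup $G$.

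Next I would transfer $G$ across the two correspondences, using the identification $\pi_1(T^{\infty},\overline{x})\simeq\prod_{\omega}\mathbb{Z}$. By Lemma \ref{Borellemma1}, applied with $\Po$ ranging over $\Sigma_{\gamma}^0$, $\Pi_{\gamma}^0$ and $\Sigma_1^1$ (each closed under continuous preimages), $G$ lies in $\Po$ as a subgroup of $\prod_{\omega}\mathbb{Z}$ if and only if it lies in $\Po$ as a subgroup of $\pi_1(T^{\infty},\overline{x})$; composing with Lemma \ref{Borellemma2} yields the analogous biconditional between $\pi_1(T^{\infty},\overline{x})$ and $\pi_1(E)$ via the preimage $f_*^{-1}(\,\cdot\,)$. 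The key point, and the only one requiring real care, is that these biconditionals hold \emph{simultaneously} for every pointclass closed under continuous preimages; consequently they preserve not merely membership in a given $\Po$ but also nonmembership in each pointclass to the left (for instance, trueness of a $\Sigma_{\gamma}^0$ set is exactly membership in $\Sigma_{\gamma}^0$ together with nonmembership in $\Pi_{\gamma}^0$, and both $\Sigma_{\gamma}^0$ and $\Pi_{\gamma}^0$ are so closed; nonmembership in all $\Sigma_{\gamma}^0$ likewise certifies non-Borelness for the analytic case). Hence the property of being \emph{true} $\Po$, or true analytic, is carried across, and each $G$ yields a subgroup $H:=f_*^{-1}(G)\leq\pi_1(E)$ of exactly the same complexity.

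Finally I would verify normality, which comes for free. As $\pi_1(T^{\infty},\overline{x})\simeq\prod_{\omega}\mathbb{Z}$ is abelian, every subgroup $G\leq\pi_1(T^{\infty},\overline{x})$ is normal; and the preimage of a normal subgroup under the homomorphism $f_*:\pi_1(E)\rightarrow\pi_1(T^{\infty},\overline{x})$ is always normal, so each $H=f_*^{-1}(G)$ is normal in $\pi_1(E)$. I do not anticipate a genuine obstacle here: the argument is a clean composition of the two correspondences with the two external theorems, the substance of the matter having already been absorbed into Lemmas \ref{Borellemma1} and \ref{Borellemma2}, and the only bookkeeping point being the uniformity across pointclasses that guarantees trueness rather than mere membership is transported.
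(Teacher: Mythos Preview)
Your proposal is correct and follows essentially the same route as the paper: produce subgroups of the desired complexity in the uncountable abelian Polish group $\prod_{\omega}\mathbb{Z}$ via the two quoted theorems, then transport both membership and nonmembership in the relevant pointclasses through Lemmas~\ref{Borellemma1} and~\ref{Borellemma2} to $\pi_1(E)$ via $f_*^{-1}$, with normality coming for free from abelianness of the target. The only slip is bibliographic: the reference \cite{DL} is Ding--Li, not Dougherty--Lupini.
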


\begin{proof}  The items (1) and (2) follow directly from the stated theorem of Farah and Solecki, for if $G \leq \prod_{n\in \omega}\mathbb{Z}$ is true $\Po$, then considering $G$ as a subgroup of $\pi_1(T^{\infty})$ we have that $G$ is true $\Po$, and so $f_*^{-1}(G)$ must also be true $\Po$ (by Lemmas \ref{Borellemma1} and \ref{Borellemma2}).  Item (3) follows in the same way, from the consequence of \cite{DL}.
\end{proof}

\end{section}

\end{document}